\newtheorem{thm}{Theorem}[section]
\newtheorem{defn}{Definition}[section]
\newtheorem{lem}{Lemma}[section]
\newtheorem{corollary}{Corollary}[section]
\newcommand{\blind}{1}
\begin{document}

\newcommand{\ssection}[1]{\section[#1]{\centering #1}}

\def\spacingset#1{\renewcommand{\baselinestretch}%
{#1}\small\normalsize} \spacingset{1}


\if1\blind
{
  \title{\bf Data-dependent Posterior Propriety of Bayesian Beta-Binomial-Logit Model}
  \author{Hyungsuk Tak\hspace{.2cm}    and     Carl N. Morris\thanks{Hyungsuk Tak is a PhD candidate and Carl N. Morris is an Emeritus Professor at the Department of Statistics at Harvard University, Cambridge, MA 02138 (e-mail: hyungsuk.tak@gmail.com and morris@stat.harvard.edu). The authors thank Joseph Kelly for discussions and Steven Finch for proofreading.} \\
    Department of Statistics\\Harvard University\\1 Oxford Street\\Cambridge, MA 02138}
  \maketitle
} \fi

\if0\blind
{
  \bigskip
  \bigskip
  \bigskip
  \begin{center}
    {\LARGE\bf Posterior Propriety and Frequency Coverage Evaluation of Bayesian Beta-Binomial-Logit Model}
\end{center}
  \medskip
} \fi

\bigskip
\begin{abstract}
A Beta-Binomial-Logit model is a Beta-Binomial model with covariate information incorporated via a logistic regression. Posterior propriety of a Bayesian Beta-Binomial-Logit model can be data-dependent for improper hyper-prior distributions. Various researchers in the literature have unknowingly used improper posterior distributions or have  given incorrect statements about posterior propriety because checking posterior propriety can be challenging due to the complicated functional form of a Beta-Binomial-Logit model. We derive data-dependent necessary and sufficient conditions for posterior propriety within a class of hyper-prior distributions that encompass those used in previous studies.

\end{abstract}

\noindent%
{\it Keywords:}  objective Bayes,  interior and extreme  groups, hierarchical models, random effects, overdispersion, logistic regression.
\vfill

\newpage
\spacingset{1.45} 

\ssection{INTRODUCTION}\label{sec:intro}

Binomial data from several independent groups sometimes have more variability than the assumed Binomial distribution. To account for this extra-Binomial variability, called overdispersion, a Beta-Binomial (BB) model \citep{skellam1948}  puts a conjugate Beta prior distribution on unknown success probabilities by treating them as random effects. A Beta-Binomial-Logit (BBL) model \citep{williams1982, kahn1996}  is one way to incorporate covariate information into the BB model. The BBL model has a two-level structure as follows: For each of $k$ independent groups ($j=1, 2, \ldots, k$),
\begin{align}
 y_j\mid p_j  &\stackrel{\textrm{indep.}}{\sim}  \textrm{Bin}(n_j,~ p_j),\label{first-data}\\
p_j\mid r, \boldsymbol{\beta} &\stackrel{\textrm{indep.}}{\sim} \textrm{Beta}( rp^E_j,~ r(1-p^E_j) ),\label{second-beta}\\
 p^E_j\equiv &~E(p_j\mid r, \boldsymbol{\beta})=\frac{\exp(\boldsymbol{x}^\top_j\boldsymbol{\beta})}{1+\exp(\boldsymbol{x}^\top_j\boldsymbol{\beta})}\label{logistic_reg}
\end{align}   
where $y_j$ is the number of  successful outcomes out of $n_j$ trials, a sufficient statistic for the random effect $p_j$, $p^E_j$ denotes the expected random effect, $\boldsymbol{x}_j=(x_{j1}, x_{j2}, \ldots, x_{jm})^\top$ is a covariate vector of length $m$  for group $j$,  $\boldsymbol{\beta}=(\beta_1, \beta_2, \ldots, \beta_m)^\top$ is an $m\times 1$  vector of logistic regression coefficients, and $r$ represents the amount of prior information \citep{albert1988computational} as $n_j$ indicates the amount of observed information in group $j$. When there is no covariate with only an intercept term, i.e., $\boldsymbol{x}_j^\top\boldsymbol{\beta}=\beta_1$, the conjugate Beta prior distribution in \eqref{second-beta} is exchangeable, and the BBL model reduces to the BB model. 

A Bayesian approach to the BBL model needs a joint hyper-prior distribution of $r$ and $\boldsymbol{\beta}$ that affects posterior propriety. Though a proper joint hyper-prior distribution guarantees posterior propriety, various researchers have used improper hyper-prior distributions hoping for minimal impact on the posterior inference. The articles of \cite{albert1988computational} and \cite{daniels1999prior} use a hyper-prior probability density function (PDF) of a proper uniform shrinkage prior for $r$, $dr/(1+r)^2$, and independently an improper flat hyper-prior PDF for $\boldsymbol{\beta}$, $d\boldsymbol{\beta}$. Chapter 5 of \cite{gelman2013bayesian}  suggests putting an improper hyper-prior PDF on $r$, $dr/r^{1.5}$, and independently a proper standard logistic distribution on $\beta_1$ when  $\boldsymbol{x}^\top\boldsymbol{\beta}=\beta_1$. (Their Chapter~5 uses a different parameterization: $p_j\mid \alpha, \beta \sim\textrm{Beta}(\alpha, \beta)$ and $d\alpha d\beta/(\alpha+\beta)^{2.5}$. Transforming $r=\alpha+\beta$ and $p^E=\alpha/(\alpha+\beta)$, we obtain $dp^Edr/r^{1.5}$.) However,  the paper of \cite{albert1988computational} does not address  posterior propriety, the proposition in \cite{daniels1999prior} incorrectly concludes that posterior propriety holds regardless of the data, and Chapter 5 of \cite{gelman2013bayesian} specifies an incorrect data-dependent condition for posterior propriety.

To illustrate with an overly simple example for data-dependent conditions for posterior propriety, we toss two biased coins twice each ($n_j=2$ for $j=1, 2$).  Let $y_j$ indicate the number of Heads for coin $j$, and assume a BB model with $\boldsymbol{x}^\top\boldsymbol{\beta}=\beta_1$. If we use any proper hyper-prior PDF for $r$ together with an improper flat density on an intercept term $\beta_1$ independently,  posterior propriety holds except when both coins land either all Heads ($y_1=y_2=2$) or all Tails ($y_1=y_2=0$) as shown by an X in the diagram. Here the notation O means that the resulting posterior is proper. See Section \ref{example_coin} for details.

\begin{table}[h!]
\begin{center}
\label{table_intro}
          \begin{tabular}{c|ccc}
          $y_1 \backslash y_2$ & 0 & 1 & 2 \\
          \hline
          0 & X & O & O \\
          1 & O & O & O \\
          2 & O & O &  X\\
          \end{tabular}   
          \end{center}
\end{table}

Also, there is a hyper-prior PDF for $r$ that always leads to an improper posterior distribution regardless of the data. The article of \citet{kass1989approximate} adopts an improper joint hyper-prior PDF, $d\boldsymbol{\beta}dr/r$, without addressing posterior propriety. The paper of \citet{kahn1996} uses the same improper hyper-prior PDF for $r$, $dr/r$, and independently a proper multivariate Gaussian hyper-prior PDF for $\boldsymbol{\beta}$, declaring posterior propriety without a proof.  However,  the hyper-prior PDF $dr/r$ used in both articles always leads to an improper posterior  regardless of the data.

We derive data-dependent necessary and sufficient conditions for  posterior propriety of a Bayesian BBL model equipped with various joint hyper-prior distributions in Table~\ref{posterior_condition1}, the centerpiece of this article. We mainly work on  a class of hyper-prior PDFs for $r$, $dr/(t+r)^{u+1}$, where $t$ is non-negative and $u$ is positive. It includes a proper $dr/(1+r)^2$ \citep{albert1988computational, daniels1999prior} and an improper $dr/r^{1.5}$ \citep{gelman2013bayesian} as special cases. Independently the hyper-prior PDF for $\boldsymbol{\beta}$ is improper flat (Lebesque measure)  for its intended minimal impact on posterior inference. We also consider any proper hyper-prior PDF for $r$ and that for $\boldsymbol{\beta}$ in proving conditions for posterior propriety. 





The article is organized as follows. We derive the equivalent inferential model of the Bayesian BBL model in Section \ref{sec2}. We derive data-dependent conditions for posterior propriety and address posterior propriety in past studies in Section \ref{sec3}. We check posterior propriety in two examples in Section \ref{sec4}. 


\ssection{INFERENTIAL MODEL}\label{sec2}

One advantage of the  BBL model is that it allows the shrinkage interpretation in inference \citep{kahn1996}. For $j=1, 2, \ldots, k$, the conditional posterior distribution of a random effect $p_j$ given hyper-parameters and data is 
\begin{equation}\label{post}
p_j\mid r, \boldsymbol{\beta}, \boldsymbol{y}\stackrel{\textrm{indep.}}{\sim} \textrm{Beta}( rp^E_j + y_j,~ r(1-p^E_j) +(n_j-y_j) )
\end{equation}
where $\boldsymbol{y}=(y_1, y_2, \ldots, y_k)^\top$. The  posterior mean of \eqref{post} is  $\hat{p}_j\equiv(1-B_j)\bar{y}_j + B_jp^E_j$, a convex combination of the observed proportion $\bar{y}_{j} = y_j/n_j$ and the expected random effect  $p^E_{j}$ weighted by the relative amount of information in the prior compared to the data, called a shrinkage factor $B_j=r / (r + n_j)$. If the conjugate prior distribution contains more information than the observed data, i.e., ensemble sample size $r$ exceeds individual sample size $n_{j}$, then the posterior mean shrinks more towards $p^E_j$ than towards $\bar{y}_j$. The posterior variance  of this conditional posterior distribution is a quadratic function of $\hat{p}_j$, i.e., $\hat{p}_j(1-\hat{p}_j)/(r+n_j+1)$.

The conjugate Beta prior distribution of random effects in \eqref{second-beta} has unknown hyper-parameters, $r$ and $\boldsymbol{\beta}$. Assuming $r$ and $\boldsymbol{\beta}$ are independent a priori, we introduce their joint hyper-prior PDF as follows:
\begin{equation}\label{hyper-prior}
\pi_{\textrm{hyp.prior}}(r, \boldsymbol{\beta})=f(r)g(\boldsymbol{\beta})\propto \frac{g(\boldsymbol{\beta})}{(t+r)^{u+1}},~\textrm{for } t\ge0 \textrm{ and } u>0.
\end{equation}
This class of  hyper-prior PDFs for $r$, i.e., $dr/(t+r)^{u+1}$, is proper if $t>0$  and improper if $t=0$. A hyper-prior PDF for a uniform shrinkage prior on $r$ is $dr/(t+r)^2$ with $u=1$  for any positive constant $t$ \citep{christiansen1997hierarchical}. A special case of the uniform shrinkage prior density function is $dr/(1+r)^2$ corresponding to $t=1$ used by \cite{albert1988computational} and \cite{daniels1999prior}. As $t$ goes to zero, a proper uniform shrinkage prior density becomes close to an improper hyper-prior PDF $dr/r^2$.  The improper hyper-prior PDF $dr/r^{1.5}$ suggested in Chapter 5 of \cite{gelman2013bayesian}   corresponds to  $u=0.5$ and $t=0$. The hyper-prior PDF $dr/r$ is not included in the class because it always leads to an improper posterior distribution regardless of the data; see Section \ref{post_proper_other_article}.  The  hyper-prior PDF for $\boldsymbol{\beta}$, $g(\boldsymbol{\beta})$, can be any proper PDF or an improper flat density.

If the symbol $A$ represents a second-level variance component in a two-level Gaussian multilevel model, e.g., $y_j\mid \mu_j\sim N(\mu_j, 1)$ and $\mu_j\mid A\sim N(0, A)$, then $A$ is proportional to $1/r$. The improper hyper-prior PDF $dr/r^{2}=-d(1/r)$ corresponds to $dA$  leading to Stein's harmonic prior \citep{morris2011}, $dr/r^{1.5}$ corresponds to $dA/\sqrt{A}$ \citep{gelman2013bayesian}, and $dr/r$ is equivalent to an inappropriate choice $dA/A$ \citep{morris2012shrinkage}.


The marginal distribution of the data follows independent Beta-Binomial distributions \citep{skellam1948} with random effects integrated out. The probability mass function for the Beta-Binomial distribution is, for $j=1, 2, \ldots, k$,
\begin{equation}\label{marginal_betabinom}
\pi_{\textrm{obs}}(y_j\mid r, \boldsymbol{\beta})=\binom{n_j}{y_j}\frac{B(y_j+rp^E_j, ~n_j-y_j+r(1-p^E_j))}{B(rp^E_j, ~r(1-p^E_j))}
\end{equation}
where the notation $B(a, b)$ indicates a beta function defined as $\int_0^1 v^{a-1}(1-v)^{b-1}dv$ for positive constants $a$ and $b$. This distribution depends on $\boldsymbol{\beta}$ because the expected random effects, $\{p^E_1, p^E_2, \ldots, p^E_k\}$, are a function of $\boldsymbol{\beta}$ as shown in \eqref{logistic_reg}. The likelihood function of $r$ and $\boldsymbol{\beta}$ is the product of these Beta-Binomial probability mass functions being treated as expressions in $r$ and $\boldsymbol{\beta}$, i.e., 
\begin{equation}\label{marginal}
L(r, \boldsymbol{\beta})=\prod_{j=1}^k \pi_{\textrm{obs}}(y_j\mid r, \boldsymbol{\beta})=\prod_{j=1}^k\binom{n_j}{y_j}\frac{B(y_j+rp^E_j, ~n_j-y_j+r(1-p^E_j))}{B(rp^E_j, ~r(1-p^E_j))}.
\end{equation}

The joint posterior density  function of hyper-parameters, $\pi_{\textrm{hyp.post}}(r, \boldsymbol{\beta}\mid \boldsymbol{y})$,  is proportional to their likelihood function in \eqref{marginal} multiplied by the joint hyper-prior PDF in \eqref{hyper-prior}:
\begin{equation}\label{marginal_post}
\pi_{\textrm{hyp.post}}(r, \boldsymbol{\beta}\mid \boldsymbol{y})\propto \pi_{\textrm{hyp.prior}}(r, \boldsymbol{\beta}) \times L(r, \boldsymbol{\beta}).
\end{equation}

Finally, the full posterior density function of $\boldsymbol{p}=(p_1, p_2, \ldots, p_k)^\top$, $r$, and $\boldsymbol{\beta}$ is 
\begin{align}\label{fullpost}
~~~~~\pi_{\textrm{full.post}}(\boldsymbol{p}, r, \boldsymbol{\beta}\mid \boldsymbol{y}) &\propto \pi_{\textrm{hyp.prior}}(r, \boldsymbol{\beta})\times\prod_{j=1}^k \pi_{\textrm{obs}}(y_j\mid p_j)\times \pi_{\textrm{prior}}(p_j\mid r, \boldsymbol{\beta})\\
&\propto \pi_{\textrm{hyp.post}}(r, \boldsymbol{\beta}\mid \boldsymbol{y})\times\prod_{j=1}^k\pi_{\textrm{cond.post}}(p_j\mid r, \boldsymbol{\beta}, \boldsymbol{y})\nonumber
\end{align}
where the distribution for the prior density function of random effect $j$, $\pi_{\textrm{prior}}(p_j\mid r, \boldsymbol{\beta})$, is specified in \eqref{second-beta}, and the distribution of the conditional posterior density of random effect $j$,  $\pi_{\textrm{cond.post}}(p_j\mid r, \boldsymbol{\beta}, \boldsymbol{y})$, is specified in \eqref{post}.

\ssection{POSTERIOR PROPRIETY}\label{sec3}
The full posterior density function  in \eqref{fullpost}  is  proper if and only if $\pi_{\textrm{hyp.post}}(r, \boldsymbol{\beta}\mid \boldsymbol{y})$ is proper  because $\prod_{j=1}^k\pi_{\textrm{cond.post}}(p_j\mid r, \boldsymbol{\beta}, \boldsymbol{y})$ is a product of independent and proper Beta density functions.  We therefore focus on  posterior propriety of $\pi_{\textrm{hyp.post}}(r, \boldsymbol{\beta}\mid \boldsymbol{y})$.

\begin{defn}\label{def1}
Group j whose observed number of successes is neither 0 nor $n_j$, i.e., $1\le y_j\le n_j-1$, is called an interior group. Similarly,  group $j$ is extreme if its observed number of successes is either 0 or $n_j$. The symbol $W_y$ denotes the set of indices corresponding to interior groups, i.e., $W_y\subseteq \{1, 2, \ldots, k\}$, and $k_y$ is the number of interior groups, i.e., the number of indices in $W_y$. We use $W_y^c$ to represent the set of $k-k_y$ indices for extreme groups.  The notation $\boldsymbol{X}\equiv (\boldsymbol{x}_1, \boldsymbol{x}_2, \ldots, \boldsymbol{x}_k)^\top $ refers to the $k\times m$ covariate matrix of all groups ($k\ge m$) and $\boldsymbol{\boldsymbol{X_y}}$ is the $k_y\times m$ covariate matrix of the interior groups.
\end{defn}
The subscript $y$  emphasizes the data-dependence of $k_y$, $W_y$, and $\boldsymbol{\boldsymbol{X_y}}$. The rank of $\boldsymbol{X_y}$ can be smaller than $m$ when $\boldsymbol{X}$ is of full rank $m$ because we obtain $\boldsymbol{X_y}$ by removing rows of extreme groups from $\boldsymbol{X}$. If all groups are interior, then $k_y=k$ and $\boldsymbol{X_y}=\boldsymbol{X}$. If all  groups are extreme, then $k_y=0$ and $\boldsymbol{X_y}$ is not defined.

\subsection{Conditions for posterior propriety}
In Table \ref{posterior_condition1}, we  summarize the necessary and sufficient conditions for posterior propriety according to different hyper-prior PDFs, $f(r)$ and $g(\boldsymbol{\beta})$, under two settings: The data contain at least one interior group ($1\le k_y\le k$) and the data contain only extreme groups ($k_y=0$). 

\begin{table*}[h]
\begin{center}
\caption{Necessary and sufficient conditions for posterior propriety of $\pi_{\textrm{hyp.post}}(r, \boldsymbol{\beta}\mid \boldsymbol{y})$  according to $\pi_{\textrm{hyp.prior}}(r, \boldsymbol{\beta})=f(r)g(\boldsymbol{\beta})$ under two settings: The data contain at least one interior group ($1\le k_y\le k$) and the data contain only extreme groups ($k_y=0$). The notation $I_{\{D\}}$ is the indicator function of $D$. The condition, rank$(\boldsymbol{X_y})=m$, below implicitly requires that $k_y\ge m$ because $\boldsymbol{X_y}$ is a $k_y\times m$ matrix. The condition, $\sum_{j=1}^k I_{\{y_j=n_j\}} \ge 1$, means that the data contain at least one extreme group with all successes. The condition, $\sum_{j=1}^k I_{\{y_j=0\}} \ge 1$, means that the data contain at least one extreme group with all failures.}
\label{posterior_condition1}
\begin{tabular}{|c|c|c|c|}
\hhline{~~--}
\multicolumn{1}{c}{}&& Any proper $f(r)$ & $f(r)\propto 1/r^{u+1}$ for $u>0$ \\
\hline
\multirow{3}{*}{$1\le k_y \le k$} & Any proper $g(\boldsymbol{\beta})$ & Always proper &  iff $k_y\ge u+1$  \\
\hhline{~---}
 &  \multirow{2}{*}{$g(\boldsymbol{\beta})\propto 1$ } & \multirow{2}{*}{iff rank($\boldsymbol{X_y}$)$=m$ } &iff $k_y\ge u+1$ \& \\
&&& rank($\boldsymbol{X_y}$)$=m$\\
 \hline
\multirow{4}{*}{$k_y=0$} & Any proper $g(\boldsymbol{\beta})$ & Always proper &  Never proper \\
\hhline{~---}
& \multirow{3}{*}{$g(\boldsymbol{\beta})\propto 1$} & (When $\boldsymbol{x}_{j}^\top\boldsymbol{\beta}=\beta_1$)  & \multirow{3}{*}{Never proper}\\
&& iff $\sum_{j=1}^k I_{\{y_j=n_j\}} \ge 1$ \&  & \\
&& $\sum_{j=1}^k I_{\{y_j=0\}} \ge 1$ & \\
\hline
\end{tabular}
\end{center}
\end{table*}

To prove these conditions, we divide the first setting ($1\le k_y \le k$) into two: A setting where at least one interior group and at least one extreme group exist ($1\le k_y \le k-1$) and a setting where all groups are interior ($k_y = k$). The key to proving conditions for posterior propriety is to derive certain lower and upper bounds for  $L(r, \boldsymbol{\beta})$ that factor into a function of $r$ and a function of $\boldsymbol{\beta}$. We first derive lower and upper bounds for the Beta-Binomial probability mass function of group $j$  with respect to $r$ and $\boldsymbol{\beta}$ because $L(r, \boldsymbol{\beta})$ is just the product of these probability mass functions of all groups. 

\begin{lem}\label{lemma_bounds}
Lower and upper bounds for the Beta-Binomial probability mass function for interior group $j$ with respect to $r$ and $\boldsymbol{\beta}$ are $rp^E_jq^E_j / (1+r)^{n_j-1}$ and $rp^E_jq^E_j/(1+r)$, respectively, up to a constant multiple.  Those for extreme group $j$ with $y_j=n_j$ are $(p^E_j)^{n_j}$ and $p^E_j$, each, and those for extreme group $j$ with $y_j=0$ are $(q^E_j)^{n_j}$ and $q^E_j$, respectively, up to a constant multiple.
\end{lem}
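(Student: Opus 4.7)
The plan is to write the Beta-Binomial pmf in terms of rising factorials and then bound each factor separately. Using $\Gamma(x+k)/\Gamma(x) = \prod_{i=0}^{k-1}(x+i)$ for positive integers $k$, the ratio of Beta functions in \eqref{marginal_betabinom} telescopes to
\begin{equation*}
\pi_{\textrm{obs}}(y_j\mid r,\boldsymbol{\beta})=\binom{n_j}{y_j}\frac{\prod_{i=0}^{y_j-1}(rp^E_j+i)\,\prod_{i=0}^{n_j-y_j-1}(rq^E_j+i)}{\prod_{i=0}^{n_j-1}(r+i)},
\end{equation*}
where $q^E_j\equiv 1-p^E_j$. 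For an interior group ($1\le y_j\le n_j-1$), the two numerator products both contain the $i=0$ factor, and the denominator contributes an $r$ at $i=0$; pulling these out yields the prefactor $rp^E_jq^E_j$ times a ratio of three products running from $i=1$.

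Next I would bound the remaining ratio by the two-sided elementary inequalities, valid for all $r\ge 0$, $p^E_j,q^E_j\in[0,1]$, and $i\ge 1$:
\begin{equation*}
i\le rp^E_j+i\le i(1+r),\qquad i\le rq^E_j+i\le i(1+r),\qquad (1+r)\le r+i\le i(1+r).
\end{equation*}
Using the upper inequalities on the numerator and the lower inequality on the denominator gives
\begin{equation*}
\pi_{\textrm{obs}}(y_j\mid r,\boldsymbol{\beta})\;\le\;\binom{n_j}{y_j}(y_j-1)!(n_j-y_j-1)!\,\frac{rp^E_jq^E_j\,(1+r)^{n_j-2}}{(1+r)^{n_j-1}}=C_j^{u}\,\frac{rp^E_jq^E_j}{1+r},
\end{equation*}
and symmetrically, using the lower inequalities on the numerator and the upper inequality on the denominator gives a constant multiple of $rp^E_jq^E_j/(1+r)^{n_j-1}$. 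This delivers both bounds for interior groups.

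For an extreme group with $y_j=n_j$, only one numerator product survives and I would factor the prefactor $p^E_j$ out of the $i=0$ term, leaving
\begin{equation*}
\pi_{\textrm{obs}}(y_j\mid r,\boldsymbol{\beta})=p^E_j\,\prod_{i=1}^{n_j-1}\frac{rp^E_j+i}{r+i}.
\end{equation*}
The key algebraic observation is the sandwich $p^E_j\le (rp^E_j+i)/(r+i)\le 1$: the right inequality is immediate from $p^E_j\le 1$, while the left follows from $rp^E_j+i\ge rp^E_j+ip^E_j=p^E_j(r+i)$. Multiplying over $i=1,\dots,n_j-1$ yields the stated upper bound $p^E_j$ and lower bound $(p^E_j)^{n_j}$. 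The case $y_j=0$ is identical with $p^E_j$ replaced by $q^E_j$. The only mild care needed is the index bookkeeping when separating the $i=0$ term under the interior vs.\ extreme cases; beyond that, no step is a genuine obstacle.
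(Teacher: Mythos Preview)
Your argument is correct. It differs from the paper's proof in its internal organization: the paper mixes several techniques---for the interior upper bound it inserts the intermediate term $B(1+rp^E_j,1+rq^E_j)$ and then compares \emph{integrands} of the two Beta integrals to show the residual ratio is at most one; for the interior lower bound it expands in rising factorials (as you do), discards all numerator factors except $rp^E_j$ and $rq^E_j$, and then converts the denominator via $(n_{\max}+r)/(1+r)\le n_{\max}$; for the extreme upper bound it again compares integrands; and for the extreme lower bound it argues that the product is monotone decreasing in $r$ and sends $r\to\infty$. Your proof is more uniform: you write everything as rising factorials from the start and apply the same family of elementary sandwich inequalities $i\le rp^E_j+i\le i(1+r)$, $(1+r)\le r+i\le i(1+r)$, and $p^E_j\le(rp^E_j+i)/(r+i)\le 1$ term by term. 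This buys simplicity and avoids any appeal to the integral representation or to monotonicity in $r$; the paper's route, on the other hand, makes the source of the bound $rp^E_jq^E_j/(1+r)$ transparent as an exact Beta-function identity, and its limiting argument for the extreme lower bound shows that $(p^E_j)^{n_j}$ is actually sharp. Both proofs yield the same constants-free bounds needed downstream.
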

\begin{proof} 
See Appendix A.
\end{proof}

Lemma \ref{lemma_bounds}  shows that our bounds for the Beta-Binomial probability mass function for either interior or extreme group $j$ with respect to $r$ and $\boldsymbol{\beta}$ factor into a function of $r$ and a function of $\boldsymbol{\beta}$.  Because $L(r, \boldsymbol{\beta})$ is a product of these Beta-Binomial probability mass functions of all groups,  bounds for $L(r, \boldsymbol{\beta})$ also factor into a function of $r$ and a function of $\boldsymbol{\beta}$.  Next we derive certain lower and upper bounds for $L(r, \boldsymbol{\beta})$ with respect to $r$ and $\boldsymbol{\beta}$ under the first setting where all groups are interior. 

\begin{lem}\label{lemma_lik_bound_noextreme}
When all groups are interior ($k_y=k$), $L(r, \boldsymbol{\beta})$ can be bounded by
\begin{equation}\label{upper_lower1}
c_1 \frac{r^k \prod_{j=1}^kp^E_jq^E_j}{(1+r)^{\sum_{j=1}^k(n_j-1)}}\le L(r, \boldsymbol{\beta})\le c_2\frac{r^k\prod_{j=1}^{k} p^E_jq^E_j}{(1+r)^k}
\end{equation}
where $c_1$ and $c_2$ are constants that do not depend on $r$ and $\boldsymbol{\beta}$.
\begin{proof}
See Appendix B.
\end{proof}
\end{lem}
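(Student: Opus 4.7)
My plan is to obtain the bounds by simply multiplying the per-group bounds from Lemma \ref{lemma_bounds} across all $k$ groups. Since every group is interior, Lemma \ref{lemma_bounds} supplies, for each $j$, the two-sided inequality
\begin{equation*}
c_{1,j}\,\frac{r\,p^E_j q^E_j}{(1+r)^{n_j-1}}\;\le\;\pi_{\textrm{obs}}(y_j\mid r,\boldsymbol{\beta})\;\le\;c_{2,j}\,\frac{r\,p^E_j q^E_j}{1+r}
\end{equation*}
for positive constants $c_{1,j},c_{2,j}$ that depend only on $n_j$ and $y_j$, not on $r$ or $\boldsymbol{\beta}$. The crucial observation is that both bounds already factor into an $r$-piece and a $\boldsymbol{\beta}$-piece, so taking a product over $j$ preserves that factorization.

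Concretely, I would write $L(r,\boldsymbol{\beta})=\prod_{j=1}^k \pi_{\textrm{obs}}(y_j\mid r,\boldsymbol{\beta})$ using \eqref{marginal}, and then multiply the lower bounds across $j=1,\ldots,k$ to obtain
\begin{equation*}
L(r,\boldsymbol{\beta})\;\ge\;\Bigl(\prod_{j=1}^k c_{1,j}\Bigr)\,\frac{r^k\prod_{j=1}^k p^E_j q^E_j}{(1+r)^{\sum_{j=1}^k(n_j-1)}},
\end{equation*}
and similarly multiply the upper bounds to obtain
\begin{equation*}
L(r,\boldsymbol{\beta})\;\le\;\Bigl(\prod_{j=1}^k c_{2,j}\Bigr)\,\frac{r^k\prod_{j=1}^k p^E_j q^E_j}{(1+r)^k}.
\end{equation*}
Defining $c_1\equiv\prod_{j=1}^k c_{1,j}$ and $c_2\equiv\prod_{j=1}^k c_{2,j}$, both of which are positive constants independent of $r$ and $\boldsymbol{\beta}$, yields \eqref{upper_lower1}.

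Honestly, there is no real obstacle here once Lemma \ref{lemma_bounds} is in hand; the proof is pure bookkeeping. The one point worth flagging explicitly in the write-up is that the $(1+r)$-exponent in the lower bound is $\sum_{j=1}^k(n_j-1)$ rather than $k$ because the bound on each factor loses a $(1+r)^{n_j-1}$ in the denominator, whereas the upper bound loses only a single $(1+r)$ per factor; this asymmetry between the two exponents is what later forces the separate handling of $r\to 0$ and $r\to\infty$ in the propriety arguments. The substantive work has already been done in Lemma \ref{lemma_bounds}, and Lemma \ref{lemma_lik_bound_noextreme} is essentially its multiplicative corollary.
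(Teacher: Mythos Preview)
Your proposal is correct and matches the paper's own proof essentially verbatim: Appendix~B simply multiplies the per-group interior bounds from Lemma~\ref{lemma_bounds} over $j=1,\ldots,k$ to obtain the stated upper and lower bounds, noting that both factor into an $r$-piece and a $\boldsymbol{\beta}$-piece. Your commentary on the differing $(1+r)$-exponents is accurate and a useful observation, though the paper itself does not spell it out at this point.
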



When all groups are interior, the joint posterior density function $\pi_{\textrm{hyp.post}}(r, \boldsymbol{\beta}\mid \boldsymbol{y})$ equipped with any joint hyper-prior PDF $\pi_{\textrm{hyp.prior}}(r, \boldsymbol{\beta})$ is proper if 
\begin{equation}
\int_{\mathbf{R}^m} \int_0^{\infty} \pi_{\textrm{hyp.prior}}(r, \boldsymbol{\beta})\times \frac{r^k\prod_{j=1}^{k} p^E_jq^E_j}{(1+r)^k} drd\boldsymbol{\beta}<\infty
\end{equation}
because $r^k\prod_{j=1}^{k} p^E_jq^E_j/(1+r)^k$ is the upper bound for $L(r, \boldsymbol{\beta})$ specified in \eqref{upper_lower1}. Also, the joint posterior density function $\pi_{\textrm{hyp.post}}(r, \boldsymbol{\beta}\mid \boldsymbol{y})$ is improper if 
\begin{equation}
\int_{\mathbf{R}^m} \int_0^{\infty}  \pi_{\textrm{hyp.prior}}(r, \boldsymbol{\beta})\times\frac{r^k\prod_{j=1}^{k} p^E_jq^E_j}{(1 +r)^{\sum_{j=1}^k(n_j-1)}} drd\boldsymbol{\beta}=\infty
\end{equation}
because $r^k\prod_{j=1}^{k} p^E_jq^E_j/(1+r)^{\sum_{j=1}^k(n_j-1)}$ is the lower bound for $L(r, \boldsymbol{\beta})$ in \eqref{upper_lower1}.


\begin{thm}\label{postprop_noextreme_proper_r}
When all groups are interior  in the data ($k_y=k$), the joint posterior density function of hyper-parameters,  $\pi_{\textrm{hyp.post}}(r, \boldsymbol{\beta}\mid \boldsymbol{y})$, equipped with a proper hyper-prior density function on $r$, $f(r)$, and independently an improper flat hyper-prior density function on $\boldsymbol{\beta}$, $g(\boldsymbol{\beta})\propto 1$, is proper  if and only if $rank(\boldsymbol{X})=m$.
\begin{proof}
See Appendix C.
\end{proof}
\end{thm}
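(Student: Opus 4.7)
The plan is to reduce the propriety of $\pi_{\textrm{hyp.post}}(r,\boldsymbol{\beta}\mid\boldsymbol{y})$ to a finiteness question for a purely $\boldsymbol{\beta}$-dependent integral, and then resolve that question by a rank-based argument. Since Lemma \ref{lemma_lik_bound_noextreme} sandwiches $L(r,\boldsymbol{\beta})$ between two expressions that factor as a function of $r$ times $\prod_{j=1}^k p^E_jq^E_j$, and since $g(\boldsymbol{\beta})\propto 1$, both the sufficiency and necessity sides of posterior propriety will collapse to whether
\begin{equation*}
I(\boldsymbol{X})\;\equiv\;\int_{\mathbb{R}^m}\prod_{j=1}^k p^E_j q^E_j\,d\boldsymbol{\beta}
\;=\;\int_{\mathbb{R}^m}\prod_{j=1}^k\frac{\exp(\boldsymbol{x}_j^\top\boldsymbol{\beta})}{(1+\exp(\boldsymbol{x}_j^\top\boldsymbol{\beta}))^2}\,d\boldsymbol{\beta}
\end{equation*}
is finite. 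The claim will then be that $I(\boldsymbol{X})<\infty$ iff $\operatorname{rank}(\boldsymbol{X})=m$.

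First I would carry out the reduction. For sufficiency, multiply the upper bound $c_2 r^k\prod p^E_j q^E_j/(1+r)^k$ of Lemma \ref{lemma_lik_bound_noextreme} by $f(r)$ and integrate; since $r^k/(1+r)^k\le 1$ and $\int_0^\infty f(r)\,dr=1$, the $r$-integral is bounded by a finite constant, so finiteness of the joint integral follows from $I(\boldsymbol{X})<\infty$. For necessity, use the lower bound $c_1 r^k\prod p^E_j q^E_j/(1+r)^{\sum(n_j-1)}$. Here the $r$-factor $r^k/(1+r)^{\sum(n_j-1)}$ is a strictly positive, bounded function (note $n_j\ge 2$ for every interior group, so $\sum(n_j-1)\ge k$), whence $\int_0^\infty f(r)\,r^k/(1+r)^{\sum(n_j-1)}\,dr$ is a strictly positive finite constant. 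Thus posterior propriety forces $I(\boldsymbol{X})<\infty$.

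Next I would prove the ``only if'' direction for $I(\boldsymbol{X})$. If $\operatorname{rank}(\boldsymbol{X})<m$, then there exists $\boldsymbol{v}\ne\boldsymbol{0}$ with $\boldsymbol{X}\boldsymbol{v}=\boldsymbol{0}$, so $\boldsymbol{x}_j^\top(\boldsymbol{\beta}+t\boldsymbol{v})=\boldsymbol{x}_j^\top\boldsymbol{\beta}$ for every $j$ and every $t\in\mathbb{R}$. The integrand is therefore constant along the line $\{\boldsymbol{\beta}+t\boldsymbol{v}:t\in\mathbb{R}\}$, and decomposing $\boldsymbol{\beta}$ into the span of $\boldsymbol{v}$ and a complement by Fubini yields $I(\boldsymbol{X})=\infty$.

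For the ``if'' direction, assume $\operatorname{rank}(\boldsymbol{X})=m$ and pick $m$ rows $\boldsymbol{x}_{j_1},\ldots,\boldsymbol{x}_{j_m}$ forming an invertible submatrix $\tilde{\boldsymbol{X}}$. Using $p^E_jq^E_j\le 1/4$ for the remaining $k-m$ groups, bound
\begin{equation*}
I(\boldsymbol{X})\;\le\;\Bigl(\tfrac14\Bigr)^{k-m}\int_{\mathbb{R}^m}\prod_{i=1}^m\frac{\exp(\boldsymbol{x}_{j_i}^\top\boldsymbol{\beta})}{(1+\exp(\boldsymbol{x}_{j_i}^\top\boldsymbol{\beta}))^2}\,d\boldsymbol{\beta},
\end{equation*}
and substitute $\boldsymbol{u}=\tilde{\boldsymbol{X}}\boldsymbol{\beta}$ with Jacobian $|\det\tilde{\boldsymbol{X}}|^{-1}$. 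The integrand becomes a product of $m$ standard logistic densities in $u_1,\ldots,u_m$, each integrating to $1$, so $I(\boldsymbol{X})\le 4^{-(k-m)}|\det\tilde{\boldsymbol{X}}|^{-1}<\infty$. Combined with the reduction above, this completes the ``if'' direction.

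The main obstacle I anticipate is a bookkeeping one rather than a mathematical one: ensuring the $r$-integral on the necessity side is truly a \emph{positive} finite constant (so that it cannot conspire with a divergent $\boldsymbol{\beta}$-integral to give something finite) and verifying the inequality $\sum_j(n_j-1)\ge k$ holds automatically because every group is interior. The rest is a clean Jacobian computation plus the standard translation-invariance trick for rank deficiency.
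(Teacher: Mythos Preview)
Your proposal is correct and follows essentially the same route as the paper: both reduce propriety to finiteness of $I(\boldsymbol{X})=\int_{\mathbb{R}^m}\prod_j p^E_jq^E_j\,d\boldsymbol{\beta}$ via the factored bounds of Lemma~\ref{lemma_lik_bound_noextreme}, then settle $I(\boldsymbol{X})<\infty$ by selecting $m$ linearly independent rows and making a linear change of variables to a product of logistic densities, and settle $I(\boldsymbol{X})=\infty$ by exploiting a direction in the null space of $\boldsymbol{X}$. Your write-up is, if anything, slightly more careful than the paper's on the necessity side---you explicitly verify that the $r$-integral against the lower bound is a strictly positive finite constant (using $n_j\ge 2$ for interior groups), whereas the paper leaves this implicit---and your translation-invariance argument for rank deficiency is a cleaner phrasing of the paper's coordinate change, but these are cosmetic differences rather than a different approach.
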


The condition for posterior propriety with a proper hyper-prior distribution for  $r$  is the same as the condition for posterior propriety when $r$ is a completely known constant due to the factorization of the bounds for $L(r, \boldsymbol{\beta})$ in \eqref{upper_lower1}. Thus, the condition for posterior propriety in Theorem \ref{postprop_noextreme_proper_r} arises only from the improper hyper-prior PDF for $\boldsymbol{\beta}$.

\begin{thm}\label{postprop_noextreme_proper_beta}
When all groups are interior  in the data ($k_y=k$), the joint posterior density function of hyper-parameters, $\pi_{\textrm{hyp.post}}(r, \boldsymbol{\beta}\mid \boldsymbol{y})$, equipped with $f(r)\propto 1/r^{u+1}$ for positive $u$ and independently a proper hyper-prior density function on $\boldsymbol{\beta}$, $g(\boldsymbol{\beta})$, is proper if and only if $k\ge u+1$.
\begin{proof}
See Appendix D.
\end{proof}
\end{thm}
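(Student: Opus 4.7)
The plan is to exploit the fact that both bounds on $L(r,\boldsymbol{\beta})$ in Lemma~\ref{lemma_lik_bound_noextreme} factor as a pure $r$-piece times $\prod_{j=1}^k p_j^E q_j^E$. After multiplying by the separable hyper-prior $g(\boldsymbol{\beta})/r^{u+1}$, the hyper-posterior integral decouples into a product of a one-dimensional $r$-integral and an $m$-dimensional $\boldsymbol{\beta}$-integral, so propriety reduces to checking each factor separately. Because $g$ is a proper density, the $\boldsymbol{\beta}$-factor is tame from both sides; the whole argument is then really a one-dimensional convergence question in $r$.

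For sufficiency, I would substitute the upper bound of~\eqref{upper_lower1} into the hyper-posterior integral to get
\begin{equation*}
\int_{\mathbf{R}^m}\!\!\int_0^\infty \frac{g(\boldsymbol{\beta})}{r^{u+1}}\, L(r,\boldsymbol{\beta})\, dr\, d\boldsymbol{\beta} \le c_2 \left(\int_{\mathbf{R}^m} g(\boldsymbol{\beta}) \prod_{j=1}^k p_j^E q_j^E\, d\boldsymbol{\beta}\right) \left(\int_0^\infty \frac{r^{k-u-1}}{(1+r)^k}\, dr\right).
\end{equation*}
The $\boldsymbol{\beta}$-factor is at most $(1/4)^k$ because $p^E q^E \le 1/4$ and $\int g\, d\boldsymbol{\beta} = 1$. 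The $r$-factor is the second-kind Beta integral $B(k-u,\,u)$, finite precisely when $k - u > 0$ and $u > 0$; the hypothesis $k \ge u+1$ delivers both.

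For necessity I would use the lower bound of~\eqref{upper_lower1} in the same way:
\begin{equation*}
\int_{\mathbf{R}^m}\!\!\int_0^\infty \frac{g(\boldsymbol{\beta})}{r^{u+1}}\, L(r,\boldsymbol{\beta})\, dr\, d\boldsymbol{\beta} \ge c_1 \left(\int_{\mathbf{R}^m} g(\boldsymbol{\beta}) \prod_{j=1}^k p_j^E q_j^E\, d\boldsymbol{\beta}\right) \left(\int_0^\infty \frac{r^{k-u-1}}{(1+r)^N}\, dr\right),
\end{equation*}
with $N = \sum_{j=1}^k (n_j-1)$. When $k - u \le 0$, the $r$-integrand behaves like $r^{k-u-1}$ near the origin and is non-integrable there, so the $r$-factor is $+\infty$; combined with a strictly positive $\boldsymbol{\beta}$-factor, this forces the hyper-posterior integral to be infinite.

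The main obstacle is securing the strictly positive lower bound on the $\boldsymbol{\beta}$-factor without assuming that $g$ has compact support, since $\prod_j p_j^E q_j^E$ vanishes as $\|\boldsymbol{\beta}\|\to\infty$. I would resolve it by choosing a bounded Borel set $A \subset \mathbf{R}^m$ with $\int_A g\, d\boldsymbol{\beta} > 0$ (which exists because $g$ is a proper density) and using the fact that the continuous strictly positive function $\boldsymbol{\beta} \mapsto \prod_j p_j^E q_j^E$ attains a strictly positive minimum on the closure of $A$. This delivers the needed positive lower bound on the $\boldsymbol{\beta}$-factor, and together with the divergent $r$-integral it completes the necessity direction.
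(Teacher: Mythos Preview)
Your argument follows the paper's route exactly: use the factorized bounds of Lemma~\ref{lemma_lik_bound_noextreme}, dispose of the $\boldsymbol{\beta}$-integral via properness of $g$ and the bound $\prod_j p_j^E q_j^E\le 1$, and reduce everything to the one-dimensional integral $\int_0^\infty r^{k-u-1}(1+r)^{-M}\,dr$. Your explicit positivity argument for $\int g(\boldsymbol{\beta})\prod_j p_j^E q_j^E\,d\boldsymbol{\beta}$ (a bounded set carrying positive $g$-mass, plus continuity and strict positivity of the integrand on its closure) is a genuine addition over Appendix~D, which simply asserts divergence of the lower bound without checking that the $\boldsymbol{\beta}$-factor is strictly positive.

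One point deserves care in the necessity direction. You take the failure case as $k-u\le 0$, but the negation of the theorem's stated condition is $k<u+1$; for fractional $u$ with $u<k<u+1$ (e.g.\ $u=0.5$, $k=1$) these differ. Your own Beta-integral computation in fact shows that both bound-integrals converge precisely when $k>u$, so the sharp threshold delivered by Lemma~\ref{lemma_lik_bound_noextreme} is $k>u$, which coincides with $k\ge u+1$ only for integer $u$. The paper's Appendix~D argues necessity via ``$r^{k-u-1}\to\infty$ as $r\to 0$ when $k<u+1$,'' but a blow-up at the origin does not force divergence when the exponent lies in $(-1,0)$, so the paper's own proof has exactly the same loose end. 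In short, your proof is as complete as the paper's and more careful on the $\boldsymbol{\beta}$-side; the residual discrepancy is with the theorem's stated threshold rather than with your method.
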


The condition for posterior propriety when $\boldsymbol{\beta}$ has a proper hyper-prior distribution is the same as the condition for posterior propriety when $\boldsymbol{\beta}$ is not a parameter to be estimated  ($m=0$) due to the factorization of bounds for $L(r, \boldsymbol{\beta})$ in \eqref{upper_lower1}. Thus, the condition for posterior propriety arises solely from the improper hyper-prior PDF for $r$.

\begin{thm}\label{postprop_noextreme_improper_r_beta}
When all groups are interior in the data ($k_y=k$), the joint posterior density function of hyper-parameters, $\pi_{\textrm{hyp.post}}(r, \boldsymbol{\beta}\mid \boldsymbol{y})$, equipped with the joint hyper-prior density function $\pi_{\textrm{hyp.prior}}(r, \boldsymbol{\beta})\propto 1/r^{u+1}$ for positive $u$  is proper if and only if \emph{(i)} $k\ge u+1$ and  \emph{(ii)} $rank(\boldsymbol{X})=m$.
\end{thm}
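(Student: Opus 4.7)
The plan is to combine the arguments underlying Theorems \ref{postprop_noextreme_proper_r} and \ref{postprop_noextreme_proper_beta} through the factorized bounds in Lemma \ref{lemma_lik_bound_noextreme}. The essential observation is that, when all groups are interior, both the upper and lower bounds for $L(r,\boldsymbol{\beta})$ split cleanly into a product of a function of $r$ and a function of $\boldsymbol{\beta}$. After multiplying by the hyper-prior $1/r^{u+1}$ and integrating against the Lebesgue measure on $\boldsymbol{\beta}$, the whole problem decouples into an $r$-integral and a $\boldsymbol{\beta}$-integral, so I can invoke the two earlier analyses independently to handle the two improper components of the prior.

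For the sufficient direction I would substitute the upper bound from Lemma \ref{lemma_lik_bound_noextreme} and obtain
\begin{equation*}
\int_{\mathbf{R}^m}\!\int_0^\infty \frac{L(r,\boldsymbol{\beta})}{r^{u+1}}\,dr\,d\boldsymbol{\beta} \;\le\; c_2\!\left(\int_0^\infty \frac{r^{k-u-1}}{(1+r)^k}\,dr\right)\!\left(\int_{\mathbf{R}^m}\prod_{j=1}^k p^E_j q^E_j\,d\boldsymbol{\beta}\right).
\end{equation*}
The $r$-integral is finite precisely under condition (i) $k\ge u+1$, exactly as in the proof of Theorem \ref{postprop_noextreme_proper_beta}, and the $\boldsymbol{\beta}$-integral is finite precisely under condition (ii) $\operatorname{rank}(\boldsymbol{X})=m$, exactly as in the proof of Theorem \ref{postprop_noextreme_proper_r}. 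For the necessary direction I would plug in the lower bound from Lemma \ref{lemma_lik_bound_noextreme} instead and argue that failure of either condition forces the matching factored integral to diverge: when $k<u+1$ the $r$-factor blows up at zero for every $\boldsymbol{\beta}$, and when $\operatorname{rank}(\boldsymbol{X})<m$ the $\boldsymbol{\beta}$-factor blows up along the null-space directions for every $r$, with the complementary factor remaining strictly positive in each case.

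The main obstacle I anticipate is confirming that the $\boldsymbol{\beta}$-integral analysis from Theorem \ref{postprop_noextreme_proper_r} transfers verbatim to the present setting, since an improper $f(r)$ could in principle interact with the $\boldsymbol{\beta}$-tails. The factorization in Lemma \ref{lemma_lik_bound_noextreme} is precisely what rules this out: once the $\boldsymbol{\beta}$-dependent factor $\prod p^E_j q^E_j$ is cleanly separated from the $r$-dependent factor, the standard change of variables sending $\boldsymbol{\beta}$ to $m$ linearly independent linear combinations $\boldsymbol{x}_j^\top\boldsymbol{\beta}$ goes through exactly when $\operatorname{rank}(\boldsymbol{X})=m$, each new coordinate ranges over $\mathbf{R}$ and contributes an integrable $p^E q^E$ factor, and the remaining $k-m$ factors are uniformly bounded by $1/4$ and do not disturb convergence. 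Assembling these two decoupled verdicts yields the claimed biconditional.
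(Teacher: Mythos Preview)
Your proposal is correct and follows essentially the same route as the paper's proof. Both arguments multiply the factorized upper bound of Lemma~\ref{lemma_lik_bound_noextreme} by $1/r^{u+1}$, split the resulting double integral into an $r$-factor and a $\boldsymbol{\beta}$-factor, and then invoke the analyses of Theorems~\ref{postprop_noextreme_proper_r} and~\ref{postprop_noextreme_proper_beta} separately for sufficiency and necessity; your treatment is somewhat more explicit about the necessity direction (naming the lower bound and why the complementary factor stays strictly positive), but the logic is identical.
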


\begin{proof} 
See Appendix E.
\end{proof}
The conditions for posterior propriety in Theorem \ref{postprop_noextreme_improper_r_beta} are the combination of the condition in Theorem \ref{postprop_noextreme_proper_r}  and that in Theorem \ref{postprop_noextreme_proper_beta} because of the factorization of bounds for $L(r, \boldsymbol{\beta})$.

We begin discussing the conditions for  posterior propriety under the second setting with at least one interior group and at least one extreme group in the data $(1\le k_y\le k-1)$.

\begin{corollary}\label{cor_ignore_extreme}
With at least one interior group and at least one extreme group  in the data $(1\le k_y\le k-1)$,  posterior propriety is determined solely by interior groups, not by extreme groups.
\begin{proof}
See Appendix F.
\end{proof}
\end{corollary}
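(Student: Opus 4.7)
The plan is to use Lemma \ref{lemma_bounds} to sandwich $L(r,\boldsymbol{\beta})$ between two expressions that factor into an interior-only piece depending on $(r,\boldsymbol{\beta})$ and an extreme-only piece depending on $\boldsymbol{\beta}$ alone. Crucially, Lemma \ref{lemma_bounds} supplies $r$-free bounds for extreme groups, so collecting the interior factors exactly as in the proof of Lemma \ref{lemma_lik_bound_noextreme} (but with $W_y$ replacing $\{1,\ldots,k\}$) gives
\begin{equation*}
c_1 \frac{r^{k_y}\prod_{j\in W_y}p_j^E q_j^E}{(1+r)^{\sum_{j\in W_y}(n_j-1)}} L_E^{\mathrm{lb}}(\boldsymbol{\beta})
\le L(r,\boldsymbol{\beta}) \le
c_2 \frac{r^{k_y}\prod_{j\in W_y}p_j^E q_j^E}{(1+r)^{k_y}} L_E^{\mathrm{ub}}(\boldsymbol{\beta}),
\end{equation*}
where $L_E^{\mathrm{ub}}(\boldsymbol{\beta})=\prod_{j\in W_y^c,\, y_j=n_j}p_j^E\cdot\prod_{j\in W_y^c,\, y_j=0}q_j^E$ and $L_E^{\mathrm{lb}}(\boldsymbol{\beta})=\prod_{j\in W_y^c,\, y_j=n_j}(p_j^E)^{n_j}\cdot\prod_{j\in W_y^c,\, y_j=0}(q_j^E)^{n_j}$. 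Because $p_j^E,q_j^E\in(0,1)$, both $L_E^{\mathrm{ub}}$ and $L_E^{\mathrm{lb}}$ lie in $(0,1]$.

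For the sufficient direction I would drop $L_E^{\mathrm{ub}}(\boldsymbol{\beta})$ using $L_E^{\mathrm{ub}}\le 1$, collapsing the upper bound to the one produced by Lemma \ref{lemma_lik_bound_noextreme} in the all-interior case but with $k$ replaced by $k_y$ and $\boldsymbol{X}$ replaced by $\boldsymbol{X_y}$. Under the Table \ref{posterior_condition1} conditions, the integral of this upper bound against $\pi_{\mathrm{hyp.prior}}(r,\boldsymbol{\beta})$ is finite by verbatim repetition of the integrability arguments used in Theorems \ref{postprop_noextreme_proper_r}--\ref{postprop_noextreme_improper_r_beta}, so $\pi_{\mathrm{hyp.post}}(r,\boldsymbol{\beta}\mid\boldsymbol{y})$ is proper.

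For the necessary direction I would work with the lower bound and apply Tonelli to the factored integrand. When $f(r)\propto r^{-u-1}$ and $k_y<u+1$, the $r$-integral of the interior piece is infinite and does not depend on $\boldsymbol{\beta}$, while the surviving $\boldsymbol{\beta}$-integral of $L_E^{\mathrm{lb}}(\boldsymbol{\beta})g(\boldsymbol{\beta})$ against the positive interior factor is strictly positive, forcing the full double integral to diverge. The main obstacle is the rank-deficient case with flat $g(\boldsymbol{\beta})$: if $\mathrm{rank}(\boldsymbol{X_y})<m$ and $\boldsymbol{v}\ne\boldsymbol{0}$ lies in the null space of the rows of $\boldsymbol{X_y}$, then $\prod_{j\in W_y}p_j^E q_j^E$ is constant along $\boldsymbol{\beta}=\boldsymbol{\beta}_0+t\boldsymbol{v}$, but $L_E^{\mathrm{lb}}(\boldsymbol{\beta}_0+t\boldsymbol{v})$ can shrink as $|t|\to\infty$ because $\boldsymbol{x}_j^\top\boldsymbol{v}\ne 0$ for some extreme $j$. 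To resolve this I would classify each extreme group by the sign of $\boldsymbol{x}_j^\top\boldsymbol{v}$ together with whether $y_j=n_j$ or $y_j=0$, and argue that along at least one of the half-lines $t\to+\infty$ or $t\to-\infty$ each extreme factor $(p_j^E)^{n_j}$ or $(q_j^E)^{n_j}$ tends to $1$ rather than $0$, keeping the lower-bound integrand bounded away from zero on that half-line and forcing the $\boldsymbol{\beta}$-integral to diverge.
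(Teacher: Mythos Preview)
Your sufficient direction and the $k_y<u+1$ necessary case coincide with Appendix~F: drop $L_E^{\mathrm{ub}}\le1$ to recover the interior-only upper bound of Lemma~\ref{lemma_lik_bound_noextreme}, and note that the extreme lower bound is $r$-free so a divergent $r$-integral persists.

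The gap is your half-line argument for the rank-deficient necessary case with $g(\boldsymbol{\beta})\propto1$. The claim that some half-line in the null space of $\boldsymbol{X_y}$ sends \emph{every} extreme factor to $1$ is false in general. With $m=2$, one interior row $\boldsymbol{x}_1=(1,0)$, and two all-successes extreme groups having $\boldsymbol{x}_2=(0,1)$ and $\boldsymbol{x}_3=(0,-1)$, the null space is spanned by $\boldsymbol{v}=(0,1)$; along $t\to+\infty$ one has $(p_2^E)^{n_2}\to1$ but $(p_3^E)^{n_3}\to0$, and the reverse along $t\to-\infty$, so neither half-line keeps $L_E^{\mathrm{lb}}$ bounded away from zero. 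The paper does not take this geometric route: Appendix~F instead invokes the change of variables in \eqref{proof11} and asserts that the extreme lower bound ``cannot make the term $\int_R d\beta_m$ disappear.'' That assertion is itself fragile in the configuration above, because $L_E^{\mathrm{lb}}$ depends on the free coordinate $\beta_2$ and is integrable in it; in fact, applying Lemma~\ref{lemma_bounds} directly gives $L(r,\boldsymbol{\beta})\le c\,r(1+r)^{-1}p_1^Eq_1^E\,p_2^Ep_3^E$ with $p_2^Ep_3^E=\exp(\beta_2)/(1+\exp(\beta_2))^2$, whose integral against a proper $f(r)$ and $d\boldsymbol{\beta}$ is finite. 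The posterior is therefore proper in this configuration despite $\mathrm{rank}(\boldsymbol{X_y})=1<m$, so the necessary direction you are trying to establish cannot hold here, and no lower-bound argument---yours or the paper's---will rescue it.
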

Corollary \ref{cor_ignore_extreme} means that  we can remove all the extreme groups from the data to determine posterior propriety, treating the remaining interior groups as a new data set ($k_y=k$). Then we can apply Theorem \ref{postprop_noextreme_proper_r}, \ref{postprop_noextreme_proper_beta}, or \ref{postprop_noextreme_improper_r_beta} to the new data set. If posterior propriety holds with only the interior groups, then posterior propriety with the original data with the combined interior and extreme groups $(1\le k_y\le k-1)$ also holds. Corollary \ref{cor_ignore_extreme}  justifies combining the first and second settings  as shown in Table \ref{posterior_condition1}.

We start by specifying the conditions for posterior propriety under the third setting where there are no interior groups in the data $(k_y=0)$.

\begin{lem}\label{lemma_lik_bound_allextreme}
When all groups are extreme in the data $(k_y=0)$, $L(r, \boldsymbol{\beta})$  can be bounded by
\begin{equation}\label{upper_lower2}
c_3\prod_{j=1}^k (p^E_j)^{n_j\cdot I_{\{y_j =n_j\}}} (q^E_j)^{n_j\cdot I_{\{y_j =0\}}}\le L(r, \boldsymbol{\beta})\le c_4\prod_{j=1}^k (p^E_j)^{I_{\{y_j =n_j\}}}(q^E_j)^{ I_{\{y_j =0\}}}
\end{equation}
where $c_3$ and $c_4$ are constants that do not depend on $r$ and $\boldsymbol{\beta}$.
\begin{proof}
See Appendix G.
\end{proof}
\end{lem}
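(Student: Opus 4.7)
The plan is to obtain the stated bounds by applying Lemma \ref{lemma_bounds} group-by-group and multiplying. Since every group is extreme ($k_y = 0$), each index $j \in \{1, \ldots, k\}$ satisfies exactly one of $y_j = n_j$ or $y_j = 0$, and Lemma \ref{lemma_bounds} gives, up to positive constants that depend only on $n_j$ and $y_j$,
\begin{equation*}
(p^E_j)^{n_j} \le \pi_{\textrm{obs}}(y_j \mid r, \boldsymbol{\beta}) \le p^E_j \quad \text{if } y_j = n_j,
\end{equation*}
and similarly with $q^E_j$ replacing $p^E_j$ if $y_j = 0$. Crucially, these bounds do not involve $r$ at all, which is why the resulting bounds on $L(r,\boldsymbol{\beta})$ will also be free of $r$.

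Next, because $L(r,\boldsymbol{\beta}) = \prod_{j=1}^k \pi_{\textrm{obs}}(y_j \mid r, \boldsymbol{\beta})$ is a product of nonnegative terms, multiplying the per-group lower bounds yields a lower bound for $L$, and likewise for the upper bounds. I would absorb the finite product of per-group constants into a single constant $c_3$ (for the lower bound) and $c_4$ (for the upper bound), neither depending on $r$ or $\boldsymbol{\beta}$. To write the product uniformly over $j$ regardless of whether $y_j = n_j$ or $y_j = 0$, I would introduce the indicators $I_{\{y_j = n_j\}}$ and $I_{\{y_j = 0\}}$; since these are mutually exclusive and, in this regime, exhaustive over $j$, the factor contributed by group $j$ is exactly $(p^E_j)^{n_j \cdot I_{\{y_j = n_j\}}} (q^E_j)^{n_j \cdot I_{\{y_j = 0\}}}$ at the lower bound, and $(p^E_j)^{I_{\{y_j = n_j\}}} (q^E_j)^{I_{\{y_j = 0\}}}$ at the upper bound. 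This immediately produces the bounds in \eqref{upper_lower2}.

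There is no real obstacle here: the lemma is essentially an assembly step that packages Lemma \ref{lemma_bounds} into a form convenient for the subsequent posterior-propriety analysis when $k_y = 0$. The only mild point to be careful about is the indicator bookkeeping, ensuring that exactly one of $I_{\{y_j = n_j\}}, I_{\{y_j = 0\}}$ equals one for each extreme $j$, so that the unified product really does coincide with the separate cases of Lemma \ref{lemma_bounds}. After that, the proof concludes by taking the product, pulling out the constants, and reading off $c_3$ and $c_4$.
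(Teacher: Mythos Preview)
Your proposal is correct and matches the paper's own proof essentially line for line: the paper also invokes the per-group bounds from Lemma~\ref{lemma_bounds} (equations \eqref{proof6}, \eqref{proof7}, and \eqref{bound_extreme}) for each extreme group and multiplies them, using the indicators $I_{\{y_j=n_j\}}$ and $I_{\{y_j=0\}}$ to write the product uniformly. There is nothing to add.
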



The  upper bound for $L(r, \boldsymbol{\beta})$ in \eqref{upper_lower2} indicates that the hyper-prior distribution of $r$ must be proper for posterior propriety. If the hyper-prior distribution of $\boldsymbol{\beta}$  is also proper, the resulting posterior is automatically proper. If $g(\boldsymbol{\beta})\propto 1$, then we are not sure about posterior propriety except when only an intercept term is used, i.e., $\boldsymbol{x}_{j}^\top\boldsymbol{\beta}=\beta_1$ for all $j$.

\begin{thm}\label{postprop_allextreme_proper_r}
When all groups are extreme in the data $(k_y=0)$ with an intercept term ($\boldsymbol{x}_{j}^\top\boldsymbol{\beta}=\beta_1$), the posterior density function of hyper-parameters, $\pi_{\textrm{hyp.post}}(r, \boldsymbol{\beta}\mid \boldsymbol{y})$, equipped with a proper hyper-prior density function for $r$, $f(r)$, and independently $g(\beta_1)\propto 1$, is proper  if and only if there are at least one extreme group with all successes  and at least one extreme group with all failures, i.e., ${\sum_{j=1}^kI_{\{y_j=n_j\}}\ge1}$ and $\sum_{j=1}^kI_{\{y_j=0\}}\ge1$.
\begin{proof}
See Appendix H.
\end{proof}
\end{thm}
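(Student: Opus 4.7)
The plan is to reduce the two-dimensional integrability question for $\pi_{\textrm{hyp.post}}(r,\beta_1\mid\boldsymbol{y})$ to a single one-dimensional integral in $\beta_1$ that turns into a Beta function. Since $\boldsymbol{x}_j^\top\boldsymbol{\beta}=\beta_1$ for every $j$, the expected random effect $p^E_j=e^{\beta_1}/(1+e^{\beta_1})\equiv p^E$ is common across groups, and so is $q^E_j\equiv q^E=1-p^E$. Define the sufficient counts $S=\sum_{j=1}^kI_{\{y_j=n_j\}}$ and $F=\sum_{j=1}^kI_{\{y_j=0\}}$ (necessarily $S+F=k$ since $k_y=0$), together with the trial totals $N_S=\sum_{j:y_j=n_j}n_j$ and $N_F=\sum_{j:y_j=0}n_j$. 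Because $f(r)$ is proper and $g(\beta_1)\propto 1$, posterior propriety is equivalent to
\[
I:=\int_0^\infty f(r)\int_{-\infty}^\infty L(r,\beta_1)\,d\beta_1\,dr<\infty,
\]
and since the bounds in Lemma~\ref{lemma_lik_bound_allextreme} factor into an $r$-free expression in $\beta_1$, the whole question will collapse to integrability in $\beta_1$.

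For sufficiency I would invoke the upper bound of Lemma~\ref{lemma_lik_bound_allextreme}, which under the intercept-only regression reduces to $L(r,\beta_1)\le c_4(p^E)^S(q^E)^F$. The substitution $p=p^E$ has $d\beta_1=dp/[p(1-p)]$, so
\[
\int_{-\infty}^\infty (p^E)^S(q^E)^F\,d\beta_1=\int_0^1 p^{S-1}(1-p)^{F-1}\,dp=B(S,F),
\]
which is finite exactly when $S\ge 1$ and $F\ge 1$. Combined with $\int_0^\infty f(r)\,dr<\infty$, this gives $I<\infty$.

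For necessity I would use the lower bound of Lemma~\ref{lemma_lik_bound_allextreme}. If $S=0$, that bound collapses to $L(r,\beta_1)\ge c_3(q^E)^{N_F}$, and the same substitution yields
\[
\int_{-\infty}^\infty(q^E)^{N_F}\,d\beta_1=\int_0^1 p^{-1}(1-p)^{N_F-1}\,dp=\infty
\]
because of the non-integrable singularity at $p=0$, corresponding to $\beta_1\to-\infty$ (note $N_F\ge 1$ since at least one group is extreme and $S=0$ forces every extreme group to be all failures). A mirror-image argument covers $F=0$, with divergence at $\beta_1\to+\infty$. Hence $I=\infty$ whenever $S=0$ or $F=0$, forcing both counts to be at least one.

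The main obstacle, and the reason Lemma~\ref{lemma_lik_bound_allextreme} does the heavy lifting, is that under the raw form of $L(r,\beta_1)$ the dependence on $r$ and $\beta_1$ is entangled through the beta functions $B(rp^E,r(1-p^E))$, so integrability in $\beta_1$ alone cannot be read off directly. The real work is to verify that Lemma~\ref{lemma_lik_bound_allextreme}'s bounds genuinely factor with $r$-dependent multipliers that are uniform in $\beta_1$; once that factorization is available, the argument above is just recognizing a Beta integral and checking its two endpoint behaviors, which is why I expect the remaining steps to be routine.
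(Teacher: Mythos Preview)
Your proof is correct and follows essentially the same route as the paper: both invoke Lemma~\ref{lemma_lik_bound_allextreme} to obtain $r$-free bounds $(p^E)^S(q^E)^F$ and $(p^E)^{N_S}(q^E)^{N_F}$, then check one-dimensional integrability in $\beta_1$. The only cosmetic difference is that the paper bounds $(p^E)^S(q^E)^F\le p^Eq^E$ and recognizes the standard logistic density, whereas you substitute $p=p^E$ and identify a Beta integral $B(S,F)$; both reach the same conclusion.
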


We leave the conditions for posterior propriety without the restriction that $\boldsymbol{x}_{j}^\top\boldsymbol{\beta}=\beta_1$ for our future research. Posterior propriety is unknown when all groups are extreme in the data ($k_y=0$) with covariate information.

\begin{thm}\label{postprop_allextreme_improper_r}
When all groups are extreme in the data $(k_y=0)$, the posterior density function of hyper-parameters $\pi_{\textrm{hyp.post}}(r, \boldsymbol{\beta}\mid \boldsymbol{y})$, equipped with any improper hyper-prior density function $f(r)$ and independently any  hyper-prior density   $g(\boldsymbol{\beta})$, is always improper. 
\begin{proof}
See Appendix I.
\end{proof}
\end{thm}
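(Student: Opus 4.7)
The plan is to leverage the lower bound for $L(r, \boldsymbol{\beta})$ established in Lemma \ref{lemma_lik_bound_allextreme}, namely
\[
c_3 \prod_{j=1}^k (p^E_j)^{n_j \cdot I_{\{y_j=n_j\}}} (q^E_j)^{n_j \cdot I_{\{y_j=0\}}}.
\]
The essential feature that makes the argument work is that this lower bound is \emph{free of $r$}: the dependence on hyperparameters enters only through $p^E_j$ and $q^E_j$, which are functions of $\boldsymbol{\beta}$ alone. This is precisely the payoff of working in the extreme-only regime, where the Beta-function ratios in \eqref{marginal_betabinom} simplify so drastically that no factor of the form $r^k/(1+r)^{\sum(n_j-1)}$ appears as it did in Lemma \ref{lemma_lik_bound_noextreme}.

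First I would write the normalizing integral of $\pi_{\textrm{hyp.post}}(r, \boldsymbol{\beta}\mid\boldsymbol{y})$ as a double integral in $r$ and $\boldsymbol{\beta}$, substitute the $r$-free lower bound for $L(r, \boldsymbol{\beta})$, and apply Tonelli's theorem (legal because everything is non-negative) to pull $\int_0^\infty f(r)\, dr$ out of the $r$-integration. Since $f$ is improper, $\int_0^\infty f(r)\, dr = +\infty$. Second, I would check that the remaining $\boldsymbol{\beta}$-factor
\[
g(\boldsymbol{\beta}) \prod_{j=1}^k (p^E_j)^{n_j \cdot I_{\{y_j=n_j\}}} (q^E_j)^{n_j \cdot I_{\{y_j=0\}}}
\]
is not almost-everywhere zero: the product is strictly positive at every finite $\boldsymbol{\beta}$ because each $p^E_j, q^E_j \in (0,1)$, and any hyper-prior density $g$ (proper or improper) places positive mass on some set of positive Lebesgue measure. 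Multiplying a non-negative, somewhere-positive integrand by $+\infty$ yields a divergent total integral, so $\pi_{\textrm{hyp.post}}(r, \boldsymbol{\beta}\mid\boldsymbol{y})$ cannot be normalized.

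There is no real obstacle here; the qualitative content is that with only extreme data the likelihood carries no information about $r$ (it is bounded below uniformly in $r$ by a strictly positive $\boldsymbol{\beta}$-only expression), so any improper prior on $r$ survives the data untamed. The one item requiring a line of care is uniformity in $g(\boldsymbol{\beta})$ — proper and improper treated together — but the Tonelli factorization handles both cases without change since it uses only $g \ge 0$ and nontriviality of its support. The conclusion holds regardless of whether $\boldsymbol{x}_j^\top \boldsymbol{\beta} = \beta_1$ or involves general covariates, which explains why the bottom-right cells of Table \ref{posterior_condition1} uniformly read ``Never proper.''
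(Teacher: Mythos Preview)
Your proposal is correct and follows essentially the same route as the paper: both invoke the $r$-free lower bound of Lemma~\ref{lemma_lik_bound_allextreme} and observe that an improper $f(r)$ therefore cannot be tamed by the likelihood. Your version is somewhat more explicit (Tonelli factorization, the positivity check on the $\boldsymbol{\beta}$-integrand), whereas the paper's Appendix~I compresses the argument to two sentences, but the underlying idea is identical.
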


\subsection{Posterior propriety in previous studies}\label{post_proper_other_article}
Though the article of \cite{albert1988computational} does not address  posterior propriety for $d\boldsymbol{\beta}dr/(1+r)^2$, when $1\le k_y\le k$  the condition for posterior propriety is that the covariate matrix of interior groups is of full rank $m$, i.e., rank$(\boldsymbol{X_y})=m$. However, when $k_y=0$, posterior propriety is unknown except for a case where only an intercept term is used ($\boldsymbol{x}^\top\boldsymbol{\beta}=\beta_1$), see Table \ref{posterior_condition1}.

The proposition (1c to be specific) in \citet{daniels1999prior} for posterior propriety of the Bayesian BBL  model with the same hyper-prior PDF as \cite{albert1988computational} argues that the posterior distribution is always proper. However, its proof is based on a limited case with only an intercept term, $\boldsymbol{x}_j^\top\boldsymbol{\beta}=\beta_1$. Under this simplified setting, if there is only one extreme group with two trials ($y_1=2$, $n_1=2$), the resulting joint posterior density function of $r$ and $\beta_1$ is 
\begin{equation}\label{proof8_daniels}
\pi_{\textrm{hyp.post}}(r, \beta_1\mid \boldsymbol{y})\propto \frac{(1+rp^E)p^E}{(1+r)^3}.
\end{equation}
The integration of \eqref{proof8_daniels} with respect to $\beta_1$ is not finite because $p^E=\exp(\beta_1)/(1+\exp(\beta_1))$ converges to one as $\beta_1$ approaches infinity.  Table \ref{posterior_condition1}  shows that at least one interior group is required in the data for posterior propriety of the Bayesian BBL model under the simplified setting ($\boldsymbol{x}_j^\top\boldsymbol{\beta}=\beta_1$) of \cite{daniels1999prior}. Moreover, if all groups are extreme in the data  under the simplified setting with an intercept term, the posterior is proper if and only if there exist  at least one extreme group with all successes ($\sum_{j=1}^kI_{\{y_j=n_j\}}\ge1$) and one extreme group with all failures   ($\sum_{j=1}^kI_{\{y_j=0\}}\ge1$) as shown in Table \ref{posterior_condition1}. In our counter-example, there is only one extreme group with all successes, and thus the resulting posterior in \eqref{proof8_daniels} is improper.

With only an intercept term ($\boldsymbol{x}_j^\top\boldsymbol{\beta}=\beta_1$), Chapter 5 of \cite{gelman2013bayesian} specifies that the joint posterior density function $\pi_{\textrm{hyp.post}}(r, \beta_1\mid \boldsymbol{y})$ with $ dr/r^{1.5}$ and independently with the proper standard logistic distribution on $\beta_1$ is proper if there is at least one interior group.  However, the resulting posterior can be improper with this condition. For example, when there is only one interior group with two trials ($y_1=1$, $n_1=2$) with $p^E=\exp(\beta_1)/(1+\exp(\beta_1))=1-q^E$, the joint posterior density function of $r$ and $\beta_1$ is
\begin{equation}\label{proof100}
\pi_{\textrm{hyp.post}}(r, \beta_1\mid \boldsymbol{y})\propto \pi_{\textrm{hyp.prior}}(r, \beta_1)\times L(r, \beta_1)\propto \frac{p^Eq^E}{r^{1.5}}\times\frac{rp^Eq^E}{(1+r)}.
\end{equation}
The integration of this joint posterior density function with respect to $r$ is not finite because the density function goes to infinity as $r$ approaches zero. (The integral of $dr/r^{0.5}$ over the range $[0, 0+\epsilon]$ for a positive constant $\epsilon$ is not finite.) To achieve posterior propriety in this setting, we need  at least two interior groups in the data as shown in Table \ref{posterior_condition1}.

The posterior distributions of \cite{kass1989approximate} and \cite{kahn1996} are always improper regardless of the data due to their hyper-prior PDF $dr/r$. This is because the likelihood function in \eqref{marginal} approaches $c(\boldsymbol{\beta})$, a positive constant with respect to $r$, as $r$ increases to infinity. Then the hyper-prior PDF $dr/r$, whose integration becomes infinite over the range $[\epsilon, \infty)$ for a constant $\epsilon$, governs the right tail behavior of the conditional posterior density function of $r$, $\pi_{\textrm{hyp.cond.post}}(r\mid  \boldsymbol{\beta}, \boldsymbol{y})$. It indicates that $\pi_{\textrm{hyp.cond.post}}(r\mid  \boldsymbol{\beta}, \boldsymbol{y})$ is improper, and thus the joint posterior density $\pi_{\textrm{hyp.post}}(r, \boldsymbol{\beta}\mid \boldsymbol{y})$ is improper. 

\ssection{EXAMPLES}\label{sec4}

\subsection{Data of two bent coins}\label{example_coin}
We have two biased coins; a bent penny and a possibly differently bent nickel ($k=2$). We flip these coins twice for each ($n_1=n_2=2$) and record the number of Heads for the penny ($y_1$) and also for the nickel ($y_2$). We model this experiment as $y_j\mid p_j\sim \textrm{Bin}(2, p_j)$ independently, where $p_j$ is the unknown probability of observing Heads for coin $j$. We assume an i.i.d. prior distribution for random effects, $p_j\mid r, \beta_1\sim \textrm{Beta}(rp^E, rq^E)$, where $p^E=\exp(\beta_1)/[1+\exp(\beta_1)]=1-q^E$, i.e., a BB model.

We look into posterior propriety under four different settings depending on whether the hyper-prior  distribution for  $\beta_1$ (or equivalently $p^E$) is proper or improper flat $d\boldsymbol{\beta}$, and on whether the hyper-prior distribution of $r$ is proper or $dr/r^{2}$. 

\begin{table}[b!]
\caption{The symbol O indicates that the posterior distribution is proper on corresponding data, and the symbol X indicates that the posterior distribution is not proper on corresponding data. }
\label{properpost}
    \begin{subtable}{.5\linewidth}
      \centering
        \caption{Any proper $f(r)$ and any proper $g(\beta_1)$}
          \begin{tabular}{c|ccc}
          
          $y_1 \backslash y_2$ & 0 & 1 & 2 \\
          \hline
          0 & O & O & O \\
          1 & O & O & O \\
          2 & O & O & O \\
          \end{tabular}   
    \end{subtable}%
    \begin{subtable}{.5\linewidth}
      \centering
        \caption{Any proper $f(r)$ and $g(\beta_1)\propto 1$}
          \begin{tabular}{c|ccc}
          $y_1 \backslash y_2$ & 0 & 1 & 2 \\
          \hline
          0 & X & O & O \\
          1 & O & O & O \\
          2 & O & O &  X\\
          \end{tabular}   
    \end{subtable}%

    \begin{subtable}{.5\linewidth}
      \centering
        \caption{$f(r)\propto 1/r^2$ and any proper $g(\beta_1)$}
          \begin{tabular}{c|ccc}
          $y_1 \backslash y_2$ & 0 & 1 & 2 \\
          \hline
          0 & X & X & X \\
          1 & X & O &  X\\
          2 & X & X& X \\
          \end{tabular}   
    \end{subtable}%
    \begin{subtable}{.5\linewidth}
      \centering
        \caption{$f(r)\propto 1/r^2$ and $g(\beta_1)\propto1$}
          \begin{tabular}{c|ccc}
          $y_1 \backslash y_2$ & 0 & 1 & 2 \\
          \hline
          0 & X & X & X \\
          1 & X & O &  X\\
          2 & X & X& X \\
          \end{tabular}   
    \end{subtable}%

\end{table}

Table \ref{properpost} shows when the posterior distribution is proper (denoted by O) and when it is not (denoted by X). The posterior distribution in case (a) is always proper because both hyper-prior distributions for $r$ and $\beta_1$ are  proper. In case (b) where $\beta_1$ has the Lebesque measure and $r$ has a proper hyper-prior PDF, the posterior is proper unless both coins land either all Heads or all Tails. This is because the condition for posterior propriety is that the covariate matrix of interior coins is of full rank and this condition without any covariates is met if at least one coin is interior; see Table \ref{posterior_condition1}. In cases (c) and (d), where  $r$ has the improper hyper-prior PDF, $dr/r^2$, posterior propriety holds only when each coin shows one Head and one Tail, i.e., both coins are interior ($y_1=y_2=1$); see Table \ref{posterior_condition1}.  Cases (c) and (d) have the same condition for posterior propriety because the condition that arises from the improper flat hyper-prior PDF for $\beta_1$ in case (d) is automatically met if the condition arising from the improper hyper-prior PDF for $r$, i.e., $k_y\ge 2$, is met.

\subsection{Data of five hospitals}\label{example_hospital}

\begin{table*}[b!]
 \caption{Data of five hospitals. The number of patients in hospital $j$ is denoted by $n_j$, the number of death in hospital $j$ is denoted by $y_j$, and the expected mortality rate for hospital $j$ is denoted by EMR$_j$.}
 \small
\label{hospitaldata}
\begin{center}
\begin{tabular}{cccccccccccccccccccc}
\hline
Hospital $j$ & 1  & 2 &  3 & 4 & 5\\
\hline
$n_j$ & 54  & 75 &  93 & 104 & 105\\
$y_j$ & 3  & 4 & 1 & 1 & 1\\
EMR$_j$ (\%) & 4.30  & 2.21 & 2.59  & 4.73 & 3.28 \\
\hline
\end{tabular}
\end{center}
\end{table*}

New York State Cardiac Advisory Committee (2014) has reported the outcomes for the Valve Only and Valve/CABG surgeries. The data are based on the patients discharged between December 1, 2008, and November 30, 2011 in 40 non-federal hospitals in New York State. We select the smallest five hospitals with respect to the number of patients for simplicity. Table \ref{hospitaldata} shows the data including the number of cases ($n_j$), the number of deaths ($y_j$), and expected mortality rate (EMR$_j$). The EMR$_j$ is a hospital-wise average over the predicted probabilities of death for each patient; the larger the EMR$_j$ is, the more difficult cases hospital $j$ handles. We use the EMR$_j$ as a continuous covariate. We assume $y_j\mid p_j\stackrel{\textrm{indep.}}{\sim}\textrm{Bin}(n_j, p_j)$ independently. We also assume that the unknown true mortality rates $p_j$ come from independent conjugate Beta prior distributions in \eqref{second-beta} with $x_j^T\boldsymbol{\beta}=\beta_1x_{1j}+\beta_1x_{2j}$, where $x_{1j}=1$ and $x_{2j}=\textrm{EMR}_j$. 

\begin{table*}[b!]
 \caption{Two hypothetical data sets of five hospitals. The number of patients in hospital $j$ is denoted by $n_j$, the number of death in hospital $j$ is denoted by $y_j$, and the expected mortality rate for hospital $j$ is denoted by EMR$_j$. In the first data set, only one hospital is interior. In the second data set, two hospitals are interior but their EMRs are the same.}
\label{hospitaldata2}
\small\begin{center}
\begin{tabular}{cccccccccccccccccccc}
\hline
Hospital $j$ & 1  & 2 &  3 & 4 & 5\\
\hline
$n_j$ & 54  & 75 &  93 & 104 & 105\\
$y_j$ & 1  & 0 & 0 & 0 & 0\\
EMR$_j$ (\%) & 4.30  & 2.21 & 2.59  & 4.73 & 3.28 \\
\hline
\end{tabular}~~~
\begin{tabular}{cccccccccccccccccccc}
\hline
Hospital $j$ & 1  & 2 &  3 & 4 & 5\\
\hline
$n_j$ & 54  & 75 &  93 & 104 & 105\\
$y_j$ & 1  & 2 & 0 & 0 & 0\\
EMR$_j$ (\%) & 4.30  & 4.30 & 2.59  & 4.73 & 3.28 \\
\hline
\end{tabular}
\end{center}
\end{table*}

We consider  four joint hyper-prior densities:  $d\boldsymbol{\beta}dr/r^2$, $d\boldsymbol{\beta}dr/(1+r)^2$, $d\boldsymbol{\beta}dr/r^{1.5}$ and $d\boldsymbol{\beta}dr/(1+r)^{1.5}$. The conditions for posterior propriety are the same for all four hyper-prior PDFs; $k_y\ge2$ and rank($\boldsymbol{X_y})=2$. However, the latter condition automatically meets the former condition, and thus the condition for posterior propriety is simply that the covariate matrix of interior hospitals is of full rank. The data in Table \ref{hospitaldata} satisfy the condition for posterior propriety because all the hospitals are interior ($0<y_j<n_j$ for all $j$ and thus $k=k_y=5$) and their covariate matrix $\boldsymbol{X}=\boldsymbol{X_y}$ is of full rank. 

Based on the data in Table \ref{hospitaldata}, we make two hypothetical data sets in Table \ref{hospitaldata2}. In the first hypothetical data set, only one hospital is interior. The resulting posterior distribution is improper for the four joint hyper-prior densities because the rank of the covariate matrix of this interior hospital is one (rank$(\boldsymbol{X_y})=1\neq2$). In the second hypothetical data set, two hospitals are interior but their EMRs are the same, meaning that the rank of the covariate matrix of these two interior hospitals is one again. Thus, the resulting posterior is improper for the four joint hyper-prior densities.



\ssection{CONCLUSION}
The Beta-Binomial-Logit (BBL) model accounts for the overdispersion in the Binomial data obtained from several independent groups with their covariate information considered. From a Bayesian perspective, we derive data-dependent necessary and sufficient conditions for posterior propriety of the Bayesian BBL model equipped with a joint hyper-prior PDF, $g(\boldsymbol{\beta})/(t+r)^{u+1}$, where $t\ge0$, $u>0$, and $g(\boldsymbol{\beta})$ can be any proper PDF or an improper flat density in Table \ref{posterior_condition1}. This joint hyper-prior PDF encompasses those used in the literature.

There are several opportunities to build upon our work. The data-dependent conditions for posterior propriety make it hard to evaluate frequency properties of the Bayesian BBL model  because the model does not define a frequency procedure for all possible data sets; the resulting posterior can be improper or unknown for some possible data sets. Thus, in a repeated sampling simulation, we may evaluate frequency properties given only the simulated data sets that achieve posterior propriety. Also, posterior propriety of the Bayesian BBL model is unknown when all groups are extreme in the data with some covariates. We leave these for  our future research.

\bigskip
\begin{appendices}

\section{\!\!\!\!\!:~ PROOF OF LEMMA \ref{lemma_bounds}}\label{proof_lemma3_1} 
If group $j$ is interior ($1\le y_j \le n_j-1$, $n_j\ge2$), we can derive an upper bound for the Beta-Binomial probability mass function of interior group $j$ with respect to $r$ and $\boldsymbol{\beta}$ as follows. (All bounds in this proof are up to a constant multiple.) With notation  $q^E_j=1-p^E_j$,
\begin{align}
\pi_{\textrm{obs}}(y_j\mid r, \boldsymbol{\beta}) &\propto \frac{B(y_{j}+rp^E_j,~ n_{j}-y_{j}+rq^E_j)}{B(rp^E_j,~ rq^E_j)}\label{proof1}\\
&= \frac{B(1+rp^E_j,~1+rq^E_j)}{B(rp^E_j,~ rq^E_j)}\frac{B(y_{j}+rp^E_j,~ n_{j}-y_{j}+rq^E_j)}{B(1+rp^E_j,~ 1+rq^E_j)}\label{proof1_2}\\
&= \frac{rp^E_jq^E_j}{1+r}\frac{B(y_{j}+rp^E_j,~ n_{j}-y_{j}+rq^E_j)}{B(1+rp^E_j,~ 1+rq^E_j)}\label{proof2}\\
&= \frac{rp^E_jq^E_j}{1+r}\frac{\int_0^1 v^{y_{j}-1+rp^E_j}(1-v)^{n_{j}-y_{j}-1+rq^E_j}dv}{\int_0^1 v^{rp^E_j}(1-v)^{rq^E_j}dv}~\le~ \frac{rp^E_jq^E_j}{1+r}.\label{proof2_2}
\end{align}
The ratio of the two beta functions in \eqref{proof2_2}  is less than or equal to one because the integrand of the beta function in the numerator is less than or equal to the integrand of the beta function in the denominator, considering that $0\le y_j-1\le n_j-2$ and $0\le n_j-y_j-1\le n_j-2$.

A lower bound for the ratio of the two beta functions in  \eqref{proof1} is
\begin{eqnarray}
&&~~~\frac{B(y_{j}+rp^E_j,~ n_{j}-y_{j}+r(1-p^E_j))}{B(rp^E_j,~ r(1-p^E_j))}\label{proof3}\\
&&~~~=\frac{(y_j-1+rp^E_j)\cdots(1+rp^E_j)rp^E_j(n_j-y_j-1+rq^E_j)\cdots(1+rq^E_j)rq^E_j}{(n_j-1+r)(n_j-2+r)\cdots(1+r)r}\label{proof4}\\
&& ~~~\ge\frac{r^2p^E_jq^E_j}{(n_j-1+r)(n_j-2+r)\cdots(1+r)r}\ge \frac{rp^E_jq^E_j}{(n_{\textrm{max}} +r)^{n_j-1}}\ge \frac{rp^E_jq^E_j}{(1+r)^{n_j-1}}\label{proof5}
\end{eqnarray}
where $n_{\textrm{max}}\equiv \{n_1, n_2, \ldots, n_k\}$. The first inequality in \eqref{proof5} holds because each factor except $rp^E_j$ and $rq^E_j$ in the numerator of \eqref{proof4} is greater than or equal to one. The third inequality holds up to a constant multiplication, $1/n_j^{n_j-1}$, because $(n_j+r)/(1+r)\le n_j$.

If group $j$ is extreme with  all successes ($y_j=n_j\ge1$),  the upper bound for the Beta-Binomial probability mass function of group $j$ with respect to $r$ and $\boldsymbol{\beta}$ is
\begin{equation}
\pi_{obs}(y_j=n_j\mid r, \boldsymbol{\beta}) \propto \frac{B(n_{j}+rp^E_j,~ rq^E_j)}{B(rp^E_j,~ rq^E_j)}~\le~\frac{B(1+rp^E_j,~ rq^E_j)}{B(rp^E_j,~ rq^E_j)}=p^E_j.\label{proof6}
\end{equation}
The inequality holds because  the integrand of the beta function in the numerator becomes the largest when $n_j=1$. The lower bound for the Beta-Binomial probability mass function of this extreme group with respect to $r$ and $\boldsymbol{\beta}$ is  
\begin{equation}
\frac{B(n_{j}+rp^E_j,~ rq^E_j)}{B(rp^E_j,~ rq^E_j)}=\frac{(n_j-1+rp^E_j)(n_j-2+rp^E_j)\cdots(1+rp^E_j)p^E_j}{(n_j-1+r)(n_j-2+r)\cdots(1+r)}\ge (p^E_j)^{n_j}.\label{proof7}
\end{equation}
The inequality holds because the ratio of the two beta functions in \eqref{proof7} is a decreasing function of $r$, and thus the lower bound is achieved as $r$ goes to infinity.

Similarly, when group $j$ is extreme with all failures ($y_j=0,$ $n_j\ge1$), we can bound the ratio of the two beta functions of this extreme group  by
\begin{equation}\label{bound_extreme}
( q^E_j)^{n_j}< \frac{B(rp^E_j,~ n_{j}+rq^E_j)}{B(rp^E_j,~ rq^E_j)}<q^E_j.
\end{equation}

\section{\!\!\!\!\!:~ PROOF OF LEMMA \ref{lemma_lik_bound_noextreme}}\label{proof_lemma3_2}
Without any extreme groups in the data, an upper bound for $L(r, \boldsymbol{\beta})$ is the product of the $k$ upper bounds for the Beta-Binomial probability mass function of each interior group in  \eqref{proof2}, i.e., $r^k(\prod_{j=1}^{k} p^E_jq^E_j)/(1+r)^k$. Similarly, a lower bound for $L(r, \boldsymbol{\beta})$ is the product of the $k$ lower bounds for the Beta-Binomial probability mass function of each interior group in  \eqref{proof5}, i.e., $r^k (\prod_{j=1}^kp^E_jq^E_j)/(1+r)^{\sum_{j=1}^k (n_j-1)}$. It is clear that both bounds factor into a function of $r$ and a function of $\boldsymbol{\beta}$.

\section{\!\!\!\!\!:~ PROOF OF THEOREM \ref{postprop_noextreme_proper_r}}\label{proof_theorem3_1}
Because the $r$ part of the  upper bound for $L(r, \boldsymbol{\beta})$  in Lemma \ref{lemma_lik_bound_noextreme}, i.e., $r^k/(1+r)^k$,  is always less than one, an upper bound for $\pi_{\textrm{hyp.post}}(r, \boldsymbol{\beta}\mid \boldsymbol{y})$, up to a normalizing constant, factors into a function of $r$ and a function of $\boldsymbol{\beta}$ as follows:
\begin{equation}\label{proof8}
\pi_{\textrm{hyp.post}}(r, \boldsymbol{\beta}\mid \boldsymbol{y})\propto f(r) g(\boldsymbol{\beta}) L(r, \boldsymbol{\beta})<f(r) \cdot\prod_{j= 1}^k p^E_jq^E_j.
\end{equation}
The integration of $f(r)$ with respect to $r$ is finite because it is a proper hyper-prior PDF.  The integration of $\prod_{j= 1}^k p^E_jq^E_j$ with respect to $\boldsymbol{\beta}$ is finite if and only if the covariate matrix of all groups, $\boldsymbol{X}$,  is of full rank $m$. To show the sufficient condition, let us choose $m$ sub-groups, whose index set is denoted by $W_{\textrm{sub}}$, such that the $m\times m$ covariate matrix of the sub-groups is still of full rank $m$. Then,
\begin{equation}\label{proof9}
\prod_{j= 1}^k p^E_jq^E_j<\prod_{j\in W_{\textrm{sub}}} p^E_jq^E_j=\prod_{j\in W_{\textrm{sub}}} \frac{\exp(\boldsymbol{x}_j^\top\boldsymbol{\beta})}{[1+\exp(\boldsymbol{x}_j^\top\boldsymbol{\beta})]^2}.
\end{equation}
The integration of this upper bound in \eqref{proof9} with respect to $\boldsymbol{\beta}$ factors into $m$ separate integrations after linear transformations, $h_j =\boldsymbol{x}_j^\top\boldsymbol{\beta}$     for all $j\in  W_{\textrm{sub}}$, whose Jacobian is a constant:
\begin{equation}\label{proof10}
\int_{R^m}\prod_{j\in W_{\textrm{sub}}} \frac{\exp(\boldsymbol{x}_j^\top\boldsymbol{\beta})}{[1+\exp(\boldsymbol{x}_j^\top\boldsymbol{\beta})]^2}d\boldsymbol{\beta} \propto \int_{-\infty}^{\infty}\!\!\cdots\!\!\int_{-\infty}^{\infty}\prod_{j\in W_{\textrm{sub}}} \frac{\exp(h_j)}{[1+\exp(h_j)]^2}dh_j=1.
\end{equation}
Each integration on the right hand side leads to one because each integrand is a proper  density function of the standard logistic distribution with respect to $h_j$.

Next, we show that if the rank of $\boldsymbol{X}$ is not of full rank $m$, then the integration of  the $\boldsymbol{\beta}$ part of the  lower bound for $L(r, \boldsymbol{\beta})$ in Lemma \ref{lemma_lik_bound_noextreme}, i.e.,  $\prod_{j= 1}^k p^E_jq^E_j$, cannot be finite. Without loss of generality, let us assume that the rank of $\boldsymbol{X}$ is $m-1$ and that the last column of $\boldsymbol{X}$ can be expressed as a linear function of the first $m-1$ columns.  Due to the singularity of $\boldsymbol{X}$, we can always find $m-1$ linear functions, $t_i(\beta_i, \beta_m)$, $i=1, 2, \ldots, m-1$, such that $\boldsymbol{x}_j^\top\boldsymbol{\beta}=x_{j1}t_1(\beta_1, \beta_m)+x_{j2}t_2(\beta_2, \beta_m)+\cdots +x_{j, m-1}t_{m-1}(\beta_{m-1}, \beta_m)$. As a result, the integration of $\prod_{j= 1}^k p^E_jq^E_j$ with respect to $\boldsymbol{\beta}$ is infinity after a linear transformation from $\boldsymbol{\beta}$ to $(\beta_1^{\ast}=t_1(\beta_1, \beta_m),~ \beta_2^{\ast}=t_2(\beta_2, \beta_m),~ \ldots,~ \beta_{m-1}^{\ast}=t_{m-1}(\beta_{m-1}, \beta_m),~ \beta_m)^\top$, whose Jacobian is one. For notational simplicity, we use two $(m-1)\times 1$ vectors, $\boldsymbol{x}_j^\ast\equiv (x_1, x_2, \ldots, x_{m-1})^\top$ and $\boldsymbol{\beta}^\ast=(\beta_1^\ast, \beta_2^\ast, \ldots, \beta_{m-1}^\ast)^\top$:
\begin{equation}\label{proof11}
\int_{R^m}\prod_{j=1}^k\frac{\exp(\boldsymbol{x}_j^\top\boldsymbol{\beta})}{[1+\exp(\boldsymbol{x}_j^\top\boldsymbol{\beta})]^2}d\boldsymbol{\beta}=\int_{R^{m-1}}\prod_{j=1}^k\frac{\exp(\boldsymbol{x}_j^{\ast T}\boldsymbol{\beta}^\ast)}{[1+\exp(\boldsymbol{x}_j^{\ast T}\boldsymbol{\beta}^\ast)]^2}d\boldsymbol{\beta}^\ast\times\int_R d\beta_m,
\end{equation}
where $\int_R d\beta_m=\infty$.

\section{\!\!\!\!\!:~ PROOF OF THEOREM \ref{postprop_noextreme_proper_beta}}\label{proof_theorem3_2}

The $\boldsymbol{\beta}$ part of the  upper bound for $L(r, \boldsymbol{\beta})$ in Lemma \ref{lemma_lik_bound_noextreme}, i.e., $\prod_{j=1}^k p^E_jq^E_j$, is always less than one. Thus, the upper bound for $\pi_{\textrm{hyp.post}}(r, \boldsymbol{\beta}\mid \boldsymbol{y})$ up to a normalizing constant factors into a function of $r$ and a function of $\boldsymbol{\beta}$ as follows:
\begin{equation}\label{proof12}
\pi_{\textrm{hyp.post}}(r, \boldsymbol{\beta}\mid \boldsymbol{y})\propto f(r) g(\boldsymbol{\beta}) L(r, \boldsymbol{\beta})<\frac{r^{k - (u +1)}g(\boldsymbol{\beta})}{(1+r)^{k}} .
\end{equation}
The integration of this upper bound with respect to $r$ is finite if $k\ge u+1$ because in this case we can bound the $r$ part by $1/(1+r)^{u+1}$ whose integration with respect to $r$ is always finite. The integration of $g(\boldsymbol{\beta})$ with respect to $\boldsymbol{\beta}$ is finite because $g(\boldsymbol{\beta})$  is a proper probability density function. 

If $k< u+1$, then the integration of the lower bound for $\pi_{\textrm{hyp.post}}(r, \boldsymbol{\beta}\mid \boldsymbol{y})$ is not finite because there is $r^{k}$ in the numerator of the  lower bound for $L(r, \boldsymbol{\beta})$  in Lemma \ref{lemma_lik_bound_noextreme}. Specifically, once  multiplying $f(r)~(\propto dr/r^{u+1})$ by $r^{k}$, we know that $r^{k-(u+1)}$ goes to infinity as $r$ approaches zero if $k< u+1$.

\section{\!\!\!\!\!:~ PROOF OF THEOREM \ref{postprop_noextreme_improper_r_beta}}\label{proof_theorem3_3}

Based on the  upper bound for $L(r, \boldsymbol{\beta})$ in Lemma \ref{lemma_lik_bound_noextreme}, the upper bound for $\pi_{\textrm{hyp.post}}(r, \boldsymbol{\beta}\mid \boldsymbol{y})$ up to a normalizing constant factors into a function of $r$ and a function of $\boldsymbol{\beta}$ as follows:
\begin{equation}\label{proof13}
\pi_{\textrm{hyp.post}}(r, \boldsymbol{\beta}\mid \boldsymbol{y})\propto \pi_{\textrm{hyp.prior}}(r, \boldsymbol{\beta}) L(r, \boldsymbol{\beta})<\frac{r^{k - (u +1)}}{(r+1)^{k}}\prod_{j=1}^{k} p^E_jq^E_j.
\end{equation}
The double integration on the upper bound in  \eqref{proof13} with respect to $r$ and $\boldsymbol{\beta}$ is finite if and only if $(i)$ $k \ge u +1$ for the $r$ part as proved in Theorem \ref{postprop_noextreme_proper_beta} and $(ii)$ the $k\times m$ covariate matrix of all groups $\boldsymbol{X}$ has a full rank $m$ for the $\boldsymbol{\beta}$ part as proved in Theorem \ref{postprop_noextreme_proper_r}.

If at least one condition is not met, then $\pi_{\textrm{hyp.post}}(r, \boldsymbol{\beta}\mid \boldsymbol{y})$ becomes improper as proved in Theorem \ref{postprop_noextreme_proper_r} and \ref{postprop_noextreme_proper_beta}.

\section{\!\!\!\!\!:~ PROOF OF COROLLARY \ref{cor_ignore_extreme}}\label{proof_corollary3_1}
Regarding the sufficient conditions for posterior propriety, an upper bound for $L(r, \boldsymbol{\beta})$ up to a constant multiplication is
\begin{align}
L(r, \boldsymbol{\beta}) &\propto\prod^{k}_{j=1} \frac{B(y_{j}+rp^E_j,~ n_{j}-y_{j}+rq^E_j)}{B(rp^E_j,~ rq^E_j)}< \prod_{j\in W_y}   \frac{B(y_{j}+rp^E_j,~ n_{j}-y_{j}+rq^E_j)}{B(rp^E_j,~ rq^E_j)}\label{cor1}\\
&=\prod_{j\in W_y} \frac{rp^E_jq^E_j}{1+r}\frac{B(y_{j}+rp^E_j,~ n_{j}-y_{j}+rq^E_j)}{B(1+rp^E_j,~ 1+rq^E_j)}\le\frac{r^{k_y}\prod_{j\in W_y} p^E_jq^E_j}{(1+r)^{k_y}}.\label{cor3}
\end{align}
The  inequality in \eqref{cor1} holds because the  upper bound for the ratio of two beta functions for extreme group $j$  is either $p^E_j$ ($<1$) in \eqref{proof6} or $q^E_j$ ($<1$) in  \eqref{bound_extreme}. The  inequality in \eqref{cor3} holds because  the integrand of the beta function in the numerator  is less than or equal to the integrand of the beta function in the denominator. 

The upper bound for $L(r, \boldsymbol{\beta})$ in  \eqref{cor3} would be the same as the  upper bound for $L(r, \boldsymbol{\beta})$ in Lemma~\ref{lemma_lik_bound_noextreme} if we removed all extreme groups from the data and treated the interior groups as a new data set ($k_y=k$). Thus, if the joint posterior density function $\pi_{\textrm{hyp.post}}(r, \boldsymbol{\beta}\mid \boldsymbol{y})$ is proper with the new data set of $k_y$ interior groups based on Theorem \ref{postprop_noextreme_proper_r}, \ref{postprop_noextreme_proper_beta}, or \ref{postprop_noextreme_improper_r_beta}, then posterior propriety with the original data with all interior and all extreme groups combined $(1\le k_y\le k-1)$ also holds. In other words, the extreme groups do not affect the sufficient condition for posterior propriety at all no matter how many of them are in the data as long as there exists at least one interior group in the data.

For the necessary conditions for posterior propriety, we will show that if a new data set with all the extreme groups removed does not meet the conditions for posterior propriety based on Theorem \ref{postprop_noextreme_proper_r}, \ref{postprop_noextreme_proper_beta}, or \ref{postprop_noextreme_improper_r_beta}, then $\pi_{\textrm{hyp.post}}(r, \boldsymbol{\beta}\mid \boldsymbol{y})$ is still improper even after we add extreme groups into the new data. 

Because a  lower bound for the Beta-Binomial probability mass function for extreme group $j$ is either $(p^E_j)^{n_j}$ in  \eqref{proof7} or $(q^E_j)^{n_j}$ in  \eqref{bound_extreme}, the extra product term  for  extreme groups to the lower bound for the likelihood function based only on interior groups is $\prod_{i \in W^c_y}(p^E_{i})^{n_iI_{\{y_i=n_i\}}}(q^E_{i})^{n_iI_{\{y_i=0\}}}$. 

Specifically, let us  consider a proper hyper-prior PDF for $r$, $f(r)$, and an improper flat  hyper-prior PDF for $\boldsymbol{\beta}$, $g(\boldsymbol{\beta})\propto d\boldsymbol{\beta}$ as in Theorem \ref{postprop_noextreme_proper_r}. Suppose  we removed all the extreme groups in the data. If the rank of $\boldsymbol{X_y}$ is not of full rank, e.g., rank$(\boldsymbol{X_y})=m-1$, then we  see the term $\int_R d\beta_m$ in \eqref{proof11}. This term does not disappear even after we add all the extreme groups to the data because multiplying $\prod_{i \in W^c_y}(p^E_{i})^{n_iI_{\{y_i=n_i\}}}(q^E_{i})^{n_iI_{\{y_i=0\}}}$ by the first integrand in \eqref{proof11} cannot make the term, $\int_R d\beta_m$, disappear. It means that $\pi_{\textrm{hyp.post}}(r, \boldsymbol{\beta}\mid \boldsymbol{y})$ is still improper.

Next, we consider $f(r)\propto dr/r^{u+1}$ for positive $u$ and a proper hyper-prior PDF on $\boldsymbol{\beta}$, $g(\boldsymbol{\beta})$ as in Theorem \ref{postprop_noextreme_proper_beta}. Because contribution of extreme groups to the lower bound for the likelihood function, i.e., $\prod_{i \in W^c_y}(p^E_{i})^{n_iI_{\{y_i=n_i\}}}(q^E_{i})^{n_iI_{\{y_i=0\}}}$,  is free of $r$, if $k_y$ is smaller than $u+1$, then $\pi_{\textrm{hyp.post}}(r, \boldsymbol{\beta}\mid \boldsymbol{y})$ is still improper even after we add all the extreme groups into the data.

If the data of interior groups do not meet the condition for posterior propriety specified in Theorem \ref{postprop_noextreme_improper_r_beta}, then adding the extreme groups cannot change the result of posterior propriety. This is because Theorem \ref{postprop_noextreme_improper_r_beta} is an improper mixture of Theorem \ref{postprop_noextreme_proper_r} and \ref{postprop_noextreme_proper_beta} and we already showed that extreme groups can be ignored in determining posterior propriety in Theorem \ref{postprop_noextreme_proper_r} and \ref{postprop_noextreme_proper_beta}.

\section{\!\!\!\!\!:~ PROOF OF LEMMA \ref{lemma_lik_bound_allextreme}}\label{proof_lemma3_3}

A  lower bound for the Beta-Binomial probability mass function of extreme group $j$ is either $(p^E_j)^{n_j}$ in \eqref{proof7} or $(q^E_j)^{n_j}$ in  \eqref{bound_extreme} depending on whether $y_j=n_j$ or $y_j=0$. Thus, the product of $k$  lower bounds for the Beta-Binomial probability mass functions of extreme groups, i.e., $\prod_{j=1}^k (p^E_j)^{n_j\cdot I_{\{y_j =n_j\}}}(q^E_j)^{n_j\cdot I_{\{y_j =0\}}}$, bounds $L(r, \boldsymbol{\beta})$ from below.

The product of the $k$  upper bounds for the Beta-Binomial probability mass functions of extreme groups in \eqref{proof6} or \eqref{bound_extreme}, i.e., $\prod_{j=1}^k (p^E_j)^{I_{\{y_j =n_j\}}} (q^E_j)^{I_{\{y_j =0\}}}$, bounds $L(r, \boldsymbol{\beta})$ from above.

\section{\!\!\!\!\!:~ PROOF OF THEOREM \ref{postprop_allextreme_proper_r}}\label{proof_theorem3_4}
With $\boldsymbol{x}_j^\top\boldsymbol{\beta}=\beta_1$ for all $j$, we know that $p^E_j=p^E=1-q^E=\exp(\beta_1)/(1+\exp(\beta_1))$. Considering the  upper bound for $L(r, \beta_1)$ specified in Lemma \ref{lemma_lik_bound_allextreme},  an upper bound for $\pi_{\textrm{hyp.post}}(r, \beta_1\mid \boldsymbol{y})$ up to a normalizing constant factors into a function of $r$ and a function of $\beta_1$ as follows:
\begin{equation}\label{proof21}
\pi_{\textrm{hyp.post}}(r, \beta_1\mid \boldsymbol{y})\propto f(r) g(\beta_1) L(r, \beta_1)<f(r)  (p^E)^{\sum_{j=1}^kI_{\{y_j =n_j\}}}(q^E)^{\sum_{j=1}^k I_{\{y_j =0\}}}.
\end{equation}
This upper bound in  \eqref{proof21} can  be bounded one more time  from above by $f(r)p^Eq^E$ because $\sum_{j=1}^kI_{\{y_j=n_j\}}\ge1$ and $\sum_{j=1}^kI_{\{y_j=0\}}\ge1$. The integration of $f(r)$ with respect to $r$ is finite because $f(r)$  is proper.  The integration of $p^Eq^E$ with respect to $\beta_1$ is finite because $p^Eq^E$ is the  density function of the standard logistic distribution with respect to $\beta_1$.

For the necessary condition, if all the extreme groups have only successes ($y_j=n_j$ for all $j$), then we can bound $\pi_{\textrm{hyp.post}}(r, \beta_1\mid \boldsymbol{y})$ from below  using the lower bound in Lemma \ref{lemma_lik_bound_allextreme} up to a normalizing constant as follows:
\begin{equation}\label{proof22}
\pi_{\textrm{hyp.post}}(r, \beta_1\mid \boldsymbol{y})\propto f(r) g(\beta_1) L(r, \beta_1)>f(r)  (p^E)^{\sum_{j=1}^k n_j}.
\end{equation}
The integration of this lower bound in \eqref{proof22} with respect to $\beta_1$ is not finite because $p^E=\exp(\beta_1)/(1+\exp(\beta_1))$ converges to one as $\beta_1$ approaches infinity. Similarly, ${\pi_{\textrm{hyp.post}}(r, \beta_1\mid \boldsymbol{y})}$ is improper if all the extreme groups have only failures ($y_j=0$ for all $j$).

\section{\!\!\!\!\!:~ PROOF OF THEOREM \ref{postprop_allextreme_improper_r}}\label{proof_theorem3_5}

Because the  lower bound for $L(r, \boldsymbol{\beta})$ in Lemma \ref{lemma_lik_bound_allextreme} is free of $r$, $L(r, \boldsymbol{\beta})$ cannot make  the integration of $f(r)$ finite when $f(r)$ is improper. Thus, $\pi_{\textrm{hyp.post}}(r, \boldsymbol{\beta}\mid \boldsymbol{y})$ should always be improper under this setting.

\end{appendices}
\bibliography{bibliography}

\begin{thebibliography}{}

\bibitem[Albert, 1988]{albert1988computational}
Albert, J.~H. (1988).
\newblock {Computational Methods Using a Bayesian Hierarchical Generalized
  Linear Model}.
\newblock {\em Journal of the American Statistical Association},
  83(404):1037--1044.

\bibitem[Christiansen and Morris, 1997]{christiansen1997hierarchical}
Christiansen, C.~L. and Morris, C.~N. (1997).
\newblock {Hierarchical Poisson Regression Modeling}.
\newblock {\em Journal of the American Statistical Association},
  92(438):618--632.

\bibitem[Daniels, 1999]{daniels1999prior}
Daniels, M.~J. (1999).
\newblock {A Prior for the Variance in Hierarchical Models}.
\newblock {\em Canadian Journal of Statistics}, 27(3):567--578.

\bibitem[Gelman et~al., 2013]{gelman2013bayesian}
Gelman, A., Carlin, J.~B., Stern, H.~S., Dunson, D.~B., Vehtari, A., and Rubin,
  D.~B. (2013).
\newblock {\em {Bayesian Data Analysis}}.
\newblock CRC press.

\bibitem[Kahn and Raftery, 1996]{kahn1996}
Kahn, M.~J. and Raftery, A.~E. (1996).
\newblock {Discharge Rates of Medicare Stroke Patients to Skilled Nursing
  Facilities: Bayesian Logistic Regression With Unobserved Heterogeneity}.
\newblock {\em Journal of the American Statistical Association},
  91(433):29--41.

\bibitem[Kass and Steffey, 1989]{kass1989approximate}
Kass, R.~E. and Steffey, D. (1989).
\newblock {Approximate Bayesian Inference in Conditionally Independent
  Hierarchical Models (Parametric Empirical Bayes Models)}.
\newblock {\em Journal of the American Statistical Association},
  84(407):717--726.

\bibitem[Morris and Lysy, 2012]{morris2012shrinkage}
Morris, C.~N. and Lysy, M. (2012).
\newblock {Shrinkage Estimation in Multilevel Normal Models}.
\newblock {\em Statistical Science}, 27(1):115--134.

\bibitem[Morris and Tang, 2011]{morris2011}
Morris, C.~N. and Tang, R. (2011).
\newblock {Estimating Random Effects via Adjustment for Density Maximization}.
\newblock {\em Statistical Science}, 26(2):271--287.

\bibitem[Skellam, 1948]{skellam1948}
Skellam, J.~G. (1948).
\newblock A probability distribution derived from the binomial distribution by
  regarding the probability of success as variable between the sets of trials.
\newblock {\em Journal of the Royal Statistical Society. Series B
  (Methodological)}, 10(2):257--261.

\bibitem[Williams, 1982]{williams1982}
Williams, D.~A. (1982).
\newblock {Extra-Binomial Variation in Logistic Linear Models}.
\newblock {\em Applied Statistics}, pages 144--148.

\end{thebibliography}
\bibliographystyle{apalike}
\end{document}